\def\p{\partial}
\def\R{\mathbb{R}}
\def\C{\mathbb{C}}
\def\N{\mathbb{N}}
\def\ve{\varepsilon}
\def\l{\lambda}
\def\L{\Lambda}
\def\i{\sqrt{-1}}
\def\t{\triangle}
\numberwithin{equation}{section}
\newtheorem{prop}{Proposition}[section]
\newtheorem{theo}[prop]{Theorem}
\newtheorem{lemma}[prop]{Lemma}
\newtheorem{rmk}[prop]{Remark}
\begin{document}
\title[The Complex Monge-Amp\`ere equation]
{The Complex Monge-Amp\`ere equation on compact K\"ahler manifolds}

\author{Xiuxiong CHEN}

\address{Department of Mathematics, University of Wisconsin, Madison, WI, 53705}
\email{xxchen@math.wisc.edu}

\author{Weiyong HE}

\address{Department of Mathematics, University of Oregon, Eugene, OR, 97403}
\email{whe@uoregon.edu}

\begin{abstract} We consider the  complex Monge-Amp\`ere equation on a compact K\"ahler manifold $(M, g)$  when the right hand side $F$ has rather weak regularity. In particular we prove that  estimate of $\t\phi$ and the gradient estimate hold when $F$ is in  $W^{1, p_0}$ for any $p_0>2n$. As an application, we show that there exists a classical solution in $W^{3, p_0}$ for  the complex Monge-Amp\`ere equation when $F$ is in $W^{1, p_0}$.  \end{abstract}

\thanks{}
\date{}
\maketitle

\section{Introduction}
Let $M$ be a compact K\"ahler manifold of complex dimension $n$ with a smooth K\"ahler metric $g=g_{i\bar j}dz^i\otimes d\bar z^j$. The corresponding K\"ahler form is given by $\omega=\i g_{i\bar j} dz^i\wedge d\bar z^j$. The K\"ahler forms in the class $[\omega]$ can be written in terms of a K\"ahler potential $\omega_\phi=\omega+\i \p\bar \p \phi$. 
We shall use the following notations, for a function $f$ and  a holomorphic coordinate $z=(z^1, \cdots, z^n)$,
\[
f_{i\bar j}=\frac{\p^2 f}{\p z^i\p \bar z^j}, \t f=g^{i\bar j} f_{i\bar j}, \t_\phi f=g^{i\bar j}_\phi f_{i\bar j}. 
\]
It is well known that the Ricci curvature of $g$ is given by
\[
R_{i\bar j}=-\frac{\p^2}{\p z^i\p \bar z^j}\log(\det(g_{i\bar j})). 
\]
In particular, the Ricci form 
\[
\rho=\frac{\i}{2\pi}R_{i\bar j} dz^i\wedge d\bar z^j
\] is a close $(1, 1)$ form and it defines a cohomology class independent of the choice of the K\"ahler metric $g$, which is the first Chern class $c_1$ of $M$. 
Therefore if a closed $(1, 1)$ form is the Ricci form of some K\"ahler metric, then its cohomology class must represent the first Chern class $c_1$. In the 1950s, Calabi \cite{Calabi53} conjectured that any form in $c_1$ can actually be written as the Ricci form of some K\"ahler metric.
The Calabi conjecture can be reduced to solving the following complex Monge-Amp\`ere equation
\begin{equation}\label{E-1-1}
\log\left(\frac{\det(g_{i\bar j}+\phi_{i\bar j})}{\det(g_{i\bar j})}\right)=F,
\end{equation}
where the function $F$ satisfies
\[
\int_M\exp(F)dvol_g =Vol(M).
\]
When $F$ is a smooth function on $M$, Yau \cite{Yau} proved that \eqref{E-1-1} has a smooth solution, hence proved the Calabi conjecture; his solution also provided K\"ahler-Ricci flat metrics on K\"ahler manifolds with zero first Chern class. More generally, Calabi  initiated the study of K\"ahler-Einstein metrics, with
\[
\rho_\phi=\l \omega_\phi
\]
for constant $\l$. The K\"ahler-Einstein equation can be deduced to
\begin{equation}\label{E-1}
\log\left(\frac{\det(g_{i\bar j}+\phi_{i\bar j})}{\det(g_{i\bar j})}\right)=F-\l \phi.
\end{equation}
When $\l<0$, Aubin \cite{Aubin} and  Yau \cite{Yau} have proved the existence of a smooth solution of \eqref{E-1}. When $\l$ is positive, and so $M$ is a Fano manifold in algebraic-geometric language, \eqref{E-1} may or may not have a smooth solution. In this case, Tian has made enormous progress towards understanding precisely when a solution exists , see  \cite{Tian90, Tian97} for example. In particular, a complete answer \cite{Tian90} for  existence of K\"ahler-Einstein was given when $n=2$;  see also recently Chen-Wang's proof  \cite{Chen-Wang1, Chen-Wang2}  via K\"ahler-Ricci flow. 

Weak solution in various settings of complex Monge-Ampere equations has also been studied extensively since the pioneering work of Bedford-Taylor \cite{bt01, bt02}. Kolodziej \cite{K98, K1} proved that 
there exists a unique bounded solution of 
\begin{equation}\label{E-weak}
\omega_\phi^n=\tilde F \omega^n
\end{equation}
when $\tilde F$ is a nonnegative $L^p$ function for $p>1$; moreover, the solution is H\"older continuous.  There are further  regularity, existence and uniqueness results on \eqref{E-weak} when $\tilde F$ is less regular and/or when $\omega$ is degenerate, to mention  \cite{Zhang, Blocki05, Guedj-Zeriahi, Demailly-Pali, Eyssidieux-Guedj-Zeriahi, Dinew} to name a few. Readers are referred to the aforementioned references for a historic overview and further references.

In the early 1980s, Calabi initiated another problem \cite{Calabi82} in K\"ahler geometry to seek extremal K\"ahler metrics, which include metrics of constant scalar curvature as a special case,
\begin{equation}\label{E-2}
R_\phi=-g^{i\bar j}_\phi\p_i\p_{\bar j}\log\left(
\det(g_{k\bar l}+\phi_{k\bar l})\right)=\underline{R},
\end{equation}
where the constant $\underline{R}$ is determined by $(M, [\omega])$. There is a tremendous body of work on the problems related to extremal metrics and much progress has been made in the last decade. The well known conjecture of Yau-Tian-Donaldson  \cite{Yau90, Tian97, Donaldson04} asserts  that if  $(M, [\omega])$ is a compact K\"ahler manifold and $[\omega]=2\pi c_1(L)$ for some holomorphic line bundle $L\rightarrow M$,  then there is a metric of constant scalar curvature in the class $[\omega]$ if and only if $(M, L)$ is K-stable. This is a core problem in K\"ahler geometry. Thorough the efforts of many mathematicians, the necessary part of this conjecture is essentially done (\cite {Tian97, Donaldson02, Donaldson051, Chen-Tian, Mabuchi04, Stoppa, ss}  ). The core problem is to find a way to understand existence problem.

Back to 1980s,   Calabi  proposed a parabolic equation, the Calabi flow, 
\begin{equation}\label{E-3}
\frac{\p \phi}{\p t}=R_\phi-\underline{R}. 
\end{equation}
One of the biggest difficulties to understand extremal metrics, in particular for equations \eqref{E-2} and \eqref{E-3}, is how to control the metric through its scalar curvature, in contrast to its Ricci curvature. For example,  it was shown \cite{Chen-He} that all metrics in $[\omega]$ are equivalent and pre-compact in $C^{1, \alpha}$ topology when Ricci curvature and potential are both uniformly bounded; 
which is used to prove the extension of the Calabi flow with Ricci curvature bound. But such a result for scalar curvature is not known. 
Recently, Donaldson solved the constant scalar curvature equation on toric K\"ahler surfaces with K-stability in a series of paper \cite{Donaldson02, Donaldson05, Donaldson051, Donaldson08, Donaldson09}. 
The scalar curvature equation in this case  can be formulated \cite{Abreu} as a fourth order equation for convex functions on certain convex polytope in $\R^2$. Roughly speaking, the key point in Donaldson's work is  to control the metric through its scalar curvature and K-stability condition; in this case,   the metrics can be written in terms of the hessian of convex functions and convex analysis plays an important role.

While this progress is indeed impressive,  the general existence problem is very elusive 
and beyond us.  In retrospect, the memorable feature of Yau's solution to Calabi conjecture
is as follows:  given a K\"ahler manifold with null first Chern class, does the $C^{2}$ estimate of potential
follow from $C^{0}$ estimate of potential immediately? \\

{\bf Guiding Problem:} Given the cscK equation,  can we control $C^{2}$ and higher derivative
estimates in terms of $C^{0}$ and/or $C^{1}$ estimates?  Is this realistic?\\

 Donaldson's work is very inspirational in the sense that this is true for cscK metrics on toric K\"ahler surfaces. In other words, for cscK metric on a toric surface, once K\"ahler potential is bounded, then all higher derivatives will be bounded {\it a priori.}  One of course would  expect the more challenge case 
 for general K\"ahler manifolds. Nevertheless this is a key problem in K\"ahler geometry we wish
  to consider.

We can view the scalar curvature equations as two coupled second order equations:
\[
\det(g_{i\bar j}+\phi_{i\bar j}) = e^{F} \det g,
\]
and
\[
R_\phi=- \t_\phi F - g^{i\bar j}_{\phi} (R_{i\bar j}(g)).
\]
We hope to get some weak regularity of the volume form through its scalar curvature (the second equation), and then try to control the metric/potential  by its volume form from the first equation. To make this strategy successful,  we want to impose as weak regularity on $F$ as possible, but still try to get a upper bound control on $n+ \triangle \phi$, hence control the metric.\\

Motivated in part by these problems, we are interested in the classical solution of  \eqref{E-1-1} when $F$ has rather weak regularity; in particular, we assume  $F\in W^{1, p_0}$ for $p_0>2n$. We believe that such results would be important, in particular for problems related to scalar curvature in K\"ahler geometry.\\

One of the key steps towards solving \eqref{E-1-1} is the estimate of $\t \phi$, which is obtained  by Aubin \cite{Aubin} and Yau \cite{Yau}.  Roughly speaking, such an estimate asserts that, for a smooth solution of \eqref{E-1-1}, 
\begin{equation}\label{E-1-2}
0<n+\t \phi\leq C e^{C_1(\phi-\inf \phi)},
\end{equation}
where $C$ depends on $F$ up to its second derivatives. 
To deal with the case when $F$ has weaker regularity, we use integral method and Moser's iteration (see \cite{moser}) instead of the maximum principle.   This allows us to obtain estimate of $\t \phi$ and gradient estimate of $\phi$ when $F\in W^{1, p_0}$ for $p_0>2n$.  
In particular, we have
\begin{theo}\label{T}
Let $M$ be a compact K\"ahler manifold of complex dimension $n$ with a smooth metric $g=g_{i\bar j}dz^i\otimes d\bar z^j$. 
Let $F$ be a function in $W^{1, p_0}$ for some $p_0>2n$, then  \eqref{E-1-1} has a classical solution $\phi\in W^{3, p_0}$. 
\end{theo}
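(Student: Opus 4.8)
\emph{Proof strategy.} I would deduce Theorem~\ref{T} from a priori estimates for smooth solutions together with an approximation argument. Mollify $F$ to smooth functions $F_\ve\to F$ in $W^{1,p_0}$; since $p_0>2n$, Morrey's inequality gives $F_\ve\to F$ in $C^{0,\a_0}$ with $\a_0=1-2n/p_0$, with uniformly bounded norms, and after adding constants we may assume $\int_M e^{F_\ve}\,dvol_g=Vol(M)$. By Yau's theorem each such right--hand side admits a smooth solution $\phi_\ve$ of \eqref{E-1-1}, which we normalize by $\sup_M\phi_\ve=0$. It then suffices to establish, uniformly in $\ve$: (i) $\|\phi_\ve\|_{C^0}\le C$; (ii) $0<n+\t\phi_\ve\le C$; and, after a bootstrap, (iii) $\|\phi_\ve\|_{W^{3,p_0}}\le C$, where $C$ depends only on $(M,g)$ and $\|F\|_{W^{1,p_0}}$.

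Step (i) is the mildest: $e^{F_\ve}$ is uniformly bounded in $L^\infty$, so the $C^0$ bound follows from Kolodziej's estimate for \eqref{E-weak}, or from Yau's $L^p$--iteration for the potential. Step (ii) is the \emph{crux}. Yau's maximum principle argument applied to $e^{-C_1\phi}(n+\t\phi)$ yields \eqref{E-1-2}, but the underlying differential inequality for $\t_\phi\!\left(e^{-C_1\phi}(n+\t\phi)\right)$ involves $\t F$ and $|\nabla F|^2$, which are not available here. Instead I would integrate this differential inequality against powers of $u:=e^{-C_1\phi}(n+\t\phi)$ and run Moser's iteration (see \cite{moser}); the first--derivative term is absorbed using $\nabla F\in L^{p_0}$ together with H\"older's and the Sobolev inequality, and the requirement $p_0>2n$ is exactly what makes the iteration close in real dimension $2n$. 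With the $C^0$ bound, this gives $n+\t\phi_\ve\le C$; since $\det(g_{i\bar j}+(\phi_\ve)_{i\bar j})=e^{F_\ve}\det(g_{i\bar j})$ with $e^{F_\ve}$ bounded above and below, the eigenvalues of $\omega_{\phi_\ve}$ with respect to $\omega$ are pinched between two positive constants, so $\omega_{\phi_\ve}$ is uniformly equivalent to $\omega$ and \eqref{E-1-1} becomes \emph{uniformly elliptic}. The gradient estimate of $\phi_\ve$ can be obtained by the same integral method, or afterwards from (i)--(ii) by linear elliptic theory.

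For (iii) and the passage to the limit the argument is routine. The equation $\log\det(g_{i\bar j}+(\phi_\ve)_{i\bar j})=F_\ve$ is concave in the complex Hessian with right--hand side uniformly bounded in $C^{0,\a_0}$, so the complex Evans--Krylov estimate gives $\|\phi_\ve\|_{C^{2,\a}}\le C$ for some $\a\in(0,\a_0]$. Hence $g^{i\bar j}_{\phi_\ve}\in C^{\a}$ uniformly; differentiating the equation in $z^k$ yields the linear elliptic equation $g^{i\bar j}_{\phi_\ve}\,\p_k(\phi_\ve)_{i\bar j}=\p_kF_\ve-g^{i\bar j}_{\phi_\ve}\,\p_kg_{i\bar j}$ with $C^{\a}$ coefficients and right--hand side uniformly bounded in $L^{p_0}$, so the Calder\'on--Zygmund estimate gives $\p_k\phi_\ve\in W^{2,p_0}$, i.e. $\|\phi_\ve\|_{W^{3,p_0}}\le C$. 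Since $W^{3,p_0}$ embeds compactly into $C^{2,\beta}$ for $\beta<\a_0$, a subsequence converges in $C^{2,\beta}$ and weakly in $W^{3,p_0}$ to some $\phi\in W^{3,p_0}$; letting $\ve\to0$ in the equation (using $F_\ve\to F$ in $C^0$ and $(\phi_\ve)_{i\bar j}\to\phi_{i\bar j}$ in $C^0$) shows $\phi$ solves \eqref{E-1-1} in the classical sense. Thus the \emph{main obstacle} is Step (ii): carrying out Moser's iteration for $\t\phi$ with only $\nabla F\in L^{p_0}$, $p_0>2n$, in hand.
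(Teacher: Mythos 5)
Your strategy is essentially the paper's: approximate $F$ by smooth $F_\ve$, invoke Yau's existence theorem, establish uniform $C^0$, gradient, $\t\phi$, $C^{2,\alpha}$ and $W^{3,p_0}$ estimates, and pass to the limit; you correctly identify the Moser-iteration bound for $\t\phi$ with $\nabla F\in L^{p_0}$, $p_0>2n$, as the new technical ingredient, and the Evans--Krylov plus $L^p$ bootstrap for step (iii) is exactly what the paper does (via Lemma~\ref{L-1} and differentiating the equation).

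One logical point needs care. You suggest the gradient estimate could be obtained ``afterwards from (i)--(ii) by linear elliptic theory,'' but that ordering is circular: in the paper's Theorem~\ref{T-0}, the $\t\phi$ estimate carries a constant depending on $\|\nabla\phi\|_{L^\infty}$, because the integration by parts in \eqref{E-1-11} produces the term $C_1\nabla F\cdot\nabla\phi\,u^{2p}$, and absorbing $|\nabla F||\nabla\phi|u^{2p}$ via H\"older requires the pointwise gradient bound to already be in hand. By contrast, Theorem~\ref{T-1}'s gradient estimate needs only the $C^0$ bound (and $\|F\|_{W^{1,p_0}}$). So the order must be $C^0 \Rightarrow$ gradient $\Rightarrow \t\phi$; it cannot be reversed. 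Your other stated option — running the integral/Moser method directly on a gradient barrier (the paper uses $u=e^{-A(\phi)}(|\nabla\phi|^2+1)$ with a suitable $A$) — is precisely what the paper does, and with that ordering fixed your argument closes.
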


To prove Theorem \ref{T}, we shall derive the following  two {\it a priori} estimates. First we have the estimate of $\t \phi$, which depends in addition on the Lipschitz norm of $\phi$. 
 \begin{theo}\label{T-0} If $\phi$ is a smooth solution of \eqref{E-1-1},  then
 \begin{equation}\label{E-4}
 0<n+\t \phi\leq C=C(\|\phi\|_{L^\infty}, \|\nabla \phi\|_{L^\infty}, \|F\|_{W^{1, p_0}}, p_0, M, g, n).
 \end{equation}
 \end{theo}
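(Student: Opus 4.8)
The plan is to follow the Aubin--Yau route to the second-order estimate, but since $F$ lies only in $W^{1,p_0}$ (so that $\t F$ is merely a distribution) I would replace the maximum principle by an integral argument of Moser-iteration type in which $F$ is differentiated at most once. Write $u:=n+\t\phi=\mathrm{tr}_\omega\omega_\phi>0$ (we may assume $n\ge2$, the case $n=1$ being immediate from \eqref{E-1-1}). The starting point is the pointwise inequality of Aubin and Yau: there is a constant $B=B(g,\|F\|_{W^{1,p_0}})\ge0$ (built from a lower bound for the bisectional curvature of $g$, together with a bounded term that is harmless because $u$ stays away from $0$) such that, using $\det\omega_\phi=e^{F}\det\omega$,
\[
\t_\phi\log u\ \ge\ \frac{\t F}{u}-B\,\mathrm{tr}_{\omega_\phi}\omega-B .
\]
Combining this with $\t_\phi\phi=n-\mathrm{tr}_{\omega_\phi}\omega$ and setting $v:=\log u-C_1\phi$ for a fixed $C_1>B$ gives
\[
\t_\phi v\ \ge\ \frac{\t F}{u}+(C_1-B)\,\mathrm{tr}_{\omega_\phi}\omega-C_3 ,
\]
with $C_3$ controlled: the effect of the substitution is that $\mathrm{tr}_{\omega_\phi}\omega$, which is not a priori bounded, now enters with a good sign. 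After adding a constant I may assume $v\ge1$, and since $|C_1\phi|\le C_1\|\phi\|_{L^\infty}$ the functions $v$ and $\log u$ differ by an additive constant controlled by $\|\phi\|_{L^\infty}$; hence it suffices to bound $\sup_Mv$.

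Next I would record the elementary facts. Since $p_0>2n=\dim_\R M$, the embedding $W^{1,p_0}\hookrightarrow C^0$ gives $\|F\|_{C^0}\le C\|F\|_{W^{1,p_0}}$, so $e^{F}$ is two-sidedly bounded. Writing $\mu_1,\dots,\mu_n>0$ for the eigenvalues of $\omega_\phi$ relative to $\omega$, so that $\sum_i\mu_i=u$ and $\prod_i\mu_i=e^{F}$, the arithmetic--geometric mean inequality gives $u\ge ne^{F/n}\ge c_0>0$, and Maclaurin's inequality applied to $(\mu_i^{-1})$ gives $\mathrm{tr}_{\omega_\phi}\omega=\sum_i\mu_i^{-1}\ge c_1u^{1/(n-1)}$; since $u\sim e^{v}$ up to factors controlled by $\|\phi\|_{L^\infty}$ and $\|F\|_{C^0}$, this yields $\mathrm{tr}_{\omega_\phi}\omega\cdot u\ge c_2v^{2}$ for $v\ge1$, with $c_2$ controlled. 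Finally the cohomological identity $\int_Mu\,\omega^n=n\int_M\omega^n$ provides the a priori bound $\|u\|_{L^{1}(\omega^n)}\le C(M,[\omega])$, hence $\|v\|_{L^{1}(\omega^n)}\le C$: this is the seed of the iteration.

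For the iteration I would, for $p\ge1$, test the second displayed inequality against the nonnegative function $u\,v^{p}$ and integrate against $\omega_\phi^n=e^{F}\omega^n$. The weight $u$ plays a double role: it cancels the $u^{-1}$ in front of $\t F$, so that one integration by parts turns $\int_M\t F\cdot v^{p}\,\omega_\phi^n$ into $\pm\int_Me^{F}\bigl(p\,v^{p-1}\langle\nabla_gF,\nabla_gv\rangle_g+v^{p}|\nabla_gF|_g^{2}\bigr)\omega^n$; and, in the Dirichlet term, $u\,|\nabla_\phi w|_\phi^{2}\ge|\nabla_gw|_g^{2}$ converts the energy in the (uncontrolled) metric $\omega_\phi$ into the background energy $\int_M|\nabla_g(v^{(p+2)/2})|_g^{2}\,\omega^n$. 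Together with the good term $(C_1-B)\int_M(\mathrm{tr}_{\omega_\phi}\omega)u\,v^{p}\,\omega_\phi^n\gtrsim\int_Mv^{p+2}\omega^n$ (using $\mathrm{tr}_{\omega_\phi}\omega\cdot u\gtrsim v^{2}$), these feed the Sobolev inequality of $(M,g)$ with exponent $2\chi=\tfrac{2n}{n-1}$. The lower-order pieces are absorbed: the term $C_3\int uv^{p}\,\omega_\phi^n$ by Young against the good term (using that $\int_Mu\,\omega_\phi^n$ is a fixed constant); the gradient term $\int e^{F}v^{p-1}\langle\nabla_gF,\nabla_gv\rangle_g\,\omega^n$ by Cauchy--Schwarz into the Dirichlet term; and $\int_Mv^{p}|\nabla_gF|_g^{2}\,\omega^n$ by H\"older with $|\nabla_gF|^{2}\in L^{p_0/2}$. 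One is left with an inequality of the shape
\[
\|v\|_{L^{(p+2)\chi}(\omega^n)}^{\,p+2}\ \le\ C(p,\text{data})\Bigl(1+\|v\|_{L^{(p-1)q'}(\omega^n)}^{\,p+2}\Bigr),\qquad q'=\tfrac{p_0}{p_0-2},
\]
and the hypothesis $p_0>2n$ is exactly the inequality $\chi>q'$ that makes the exponents strictly improve at each step; iterating from $v\in L^{1}$ and tracking the constants gives $\sup_Mv\le C$ in terms of $\|\phi\|_{L^\infty}$, $\|\nabla\phi\|_{L^\infty}$, $\|F\|_{W^{1,p_0}}$, $p_0$, $M$, $g$, $n$, which is \eqref{E-4}.

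The step I expect to require the most care is the treatment of the term produced when the weight $u=(\mathrm{const})\,e^{v+C_1\phi}$ is differentiated, namely $C_1\int_Mu\,v^{p}\,\langle\nabla_\phi v,\nabla_\phi\phi\rangle_\phi\,\omega_\phi^n$: a crude Cauchy--Schwarz bound by $\mathrm{tr}_{\omega_\phi}\omega\cdot\|\nabla\phi\|_{L^\infty}^{2}$ only closes the estimate for small gradient, so one must instead integrate this term by parts once more --- exploiting that the cofactor matrix of $g_\phi$ is divergence-free and that $\t_\phi\phi=n-\mathrm{tr}_{\omega_\phi}\omega$ --- thereby trading it against the nonnegative term $\int_Mu\,v^{p+1}|\nabla_\phi\phi|_\phi^{2}\,\omega_\phi^n$ and curvature/volume terms; it is precisely here that $\|\nabla\phi\|_{L^\infty}$ genuinely enters the final bound. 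The positivity of $\mathrm{tr}_{\omega_\phi}\omega$ after the substitution, its lower bound $\gtrsim u^{1/(n-1)}$, and the borderline Sobolev relation $p_0>2n$ are the three structural facts that make all of these absorptions close.
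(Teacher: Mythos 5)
Your overall scaffolding --- Yau's pointwise differential inequality, an integral estimate in which $F$ is differentiated only once, the trick of inserting the weight $n+\t\phi$ so that the $\omega_\phi$--Dirichlet energy dominates the background $\omega$--Dirichlet energy, Moser iteration with the Sobolev exponent $\tfrac{2n}{n-1}$, and the H\"older/Sobolev arithmetic that makes $p_0>2n$ the threshold --- is the same as in the paper. But there is a genuine gap precisely at the step you yourself flag as delicate, and your proposed repair does not close it.

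The issue is your choice of iteration quantity and test function. You set $u=n+\t\phi$, $v=\log u-C_1\phi$, and test $\t_\phi v$ against $u\,v^p$. Since $\nabla u=u(\nabla v+C_1\nabla\phi)$, integration by parts produces, in addition to the good Dirichlet term, the cross term
\[
-C_1\int_M u\,v^{p}\,\langle\nabla_\phi\phi,\nabla_\phi v\rangle_\phi\,dvol_\phi .
\]
By Cauchy--Schwarz and $|\nabla_\phi\phi|^{2}_\phi\le\|\nabla\phi\|^{2}_{L^\infty}\,\mathrm{tr}_{\omega_\phi}\omega$ this contributes, on the bad side, a term of the exact same structure
\[
C_1^{2}\,\|\nabla\phi\|_{L^\infty}^{2}\int_M u\,v^{p}\,\mathrm{tr}_{\omega_\phi}\omega\,dvol_\phi
\]
as the good term $(C_1-B)\int_M u\,v^{p}\,\mathrm{tr}_{\omega_\phi}\omega\,dvol_\phi$. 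The competition is $(C_1-B)$ versus $C_1^{2}\|\nabla\phi\|_{L^\infty}^{2}$, which one can only win if $\|\nabla\phi\|_{L^\infty}$ is small --- as you note. Your proposed fix --- integrating the cross term by parts using that $\sqrt{\det g_\phi}\,g^{i\bar j}_\phi$ is divergence-free and that $\t_\phi\phi=n-\mathrm{tr}_{\omega_\phi}\omega$ --- does not remove the difficulty: carrying out that integration by parts in either direction produces the identity
\[
(p+1)I_p+I_{p+1}=-C_1\int_M u\,v^{p+1}|\nabla_\phi\phi|^2_\phi\,dvol_\phi-\int_M u\,v^{p+1}\bigl(n-\mathrm{tr}_{\omega_\phi}\omega\bigr)dvol_\phi ,
\]
where $I_q:=\int_M u\,v^{q}\langle\nabla_\phi\phi,\nabla_\phi v\rangle_\phi\,dvol_\phi$, which is a circular relation between $I_p$ and $I_{p+1}$, and whose $|\nabla_\phi\phi|^2_\phi$ term reintroduces, after $|\nabla_\phi\phi|^2_\phi\le\|\nabla\phi\|^{2}_{L^\infty}\mathrm{tr}_{\omega_\phi}\omega$, a term $\sim\|\nabla\phi\|_{L^\infty}^{2}\int u\,v^{p+1}\mathrm{tr}_{\omega_\phi}\omega$ that has a \emph{higher} power of $v$ than the good term and hence cannot be absorbed either.

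What fixes this --- and it is exactly what the paper does --- is to take the iterated quantity to be Yau's barrier $u_{Y}:=e^{-C_1\phi}(n+\t\phi)=e^{v}$ itself and to test against its powers (the paper tests $\t_\phi(u_Y^p)$ against $u_Y^{p+1}$, leading to the clean identity $-2\int u_Y|\nabla(u_Y^p)|^2_\phi\,dvol_\phi=p\int u_Y^{2p}\t_\phi u_Y\,dvol_\phi$). Equivalently, if you insist on iterating on $v$, test against $u_Y\,v^p$ instead of $u\,v^p$: since $\nabla u_Y=u_Y\nabla v$, the gradient of the weight is \emph{collinear} with $\nabla v$ and the cross term vanishes identically. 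The leftover factor $u/u_Y=e^{C_1\phi}$ in front of the $\t F$ term is bounded, the weight $u_Y$ still satisfies $u_Y\,|\nabla w|^2_\phi\ge e^{-C_1\|\phi\|_{L^\infty}}|\nabla w|^2_g$, and $\|\nabla\phi\|_{L^\infty}$ then enters only through the harmless term $\nabla F\cdot\nabla\phi$ created by one integration by parts of $\t F$ (cf.\ the paper's \eqref{E-1-11}), which is controlled by $\|\nabla\phi\|_{L^\infty}\|\nabla F\|_{L^{p_0}}$ and never competes with the main good term. With that one change your outline would close; as written, the scheme does not.
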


\begin{rmk}
Note that in Theorem \ref{T-0}, $p_0>2n$ is optimal in the sense that when $p_0\leq 2n$, $F\in W^{1, p_0}(M, g)$ does not imply $F\in L^\infty$ by the Sobolev embedding. Actually in general this is not true; hence \eqref{E-4} cannot hold in general when $F\in W^{1, p_0}$ for $p_0\leq 2n$. In particular, when $p_0\leq 2n$, one cannot expect a classical solution in Theorem \ref{T}. 
\end{rmk}

The  $L^\infty$ estimate of $\phi$ was originally derived by Yau \cite{Yau} after obtaining estimate of $\t\phi$ in \eqref{E-1-2}.   Kolodziej \cite{K98} proved that  there exists a unique bounded solution $\phi$ of \eqref{E-weak}  $\tilde F\in L^p$ and $\tilde F$ is nonnegative, $p>1$; note that his result  holds  for the degenerate case. Later on he proved the H\"older estimate of $\phi$ in \cite{K1}.  
Hence we shall assume that $\|\phi\|_{L^\infty}$ is bounded and derive further regularity of $\phi$. 

The gradient estimate of $\phi$ was not required in \cite{Aubin, Yau} to derive the estimate of $\t \phi$. But we shall need this estimate.  Such an estimate, which also holds for \eqref{E-weak} and the Dirichlet problem of the complex Monge-Amp\`ere equation on manifolds with boundary,  is derived by \cite{Hanani, Chen, Blocki1, Guan, Guan-Li, Phong-Sturm} in various settings;  in particular, Blocki \cite{Blocki1} established the gradient estimate of \eqref{E-1-1} when  $F$ is  Lipschitz (we should mention that his results also holds for $\eqref{E-weak}$ when). We shall prove, when $F$ in $W^{1, p_0}$, $p_0>2n$, that
\begin{theo}\label{T-1}If $\phi$ is a smooth solution of \eqref{E-1-1},  then
 \begin{equation}\label{E-5}
 |\nabla \phi| \leq C(\|\phi\|_{L^\infty}, \|F\|_{W^{1, p_0}}, p_0, M, g, n).
 \end{equation}
 \end{theo}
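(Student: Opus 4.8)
The plan is to turn \eqref{E-5} into an $L^\infty$ bound for a nonnegative subsolution of a second order elliptic inequality and then run a Moser iteration, the point being that $F\in W^{1,p_0}$ with $p_0>2n$ supplies simultaneously $\|F\|_{L^\infty}\le C$ --- so that the volume forms $\omega_\phi^n=e^F\omega^n$ and $\omega^n$ are uniformly comparable --- and $|\nabla F|\in L^{p_0}\subset L^n$, with room to spare. After subtracting a constant, assume $\sup_M\phi=0$, so $0\le-\phi\le\|\phi\|_{L^\infty}$. Writing $\lambda_1,\dots,\lambda_n>0$ for the eigenvalues of $\omega_\phi$ relative to $\omega$, the equation says $\prod_i\lambda_i=e^F$, so by the arithmetic--geometric mean inequality $\mathrm{tr}_{\omega_\phi}\omega=\sum_i\lambda_i^{-1}\ge ne^{-\|F\|_{L^\infty}/n}>0$; hence $\omega\wedge\omega_\phi^{n-1}=\tfrac1n(\mathrm{tr}_{\omega_\phi}\omega)\,\omega_\phi^n\ge c_1\omega^n$ for a controlled $c_1>0$.

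\emph{Step 1: the differential inequality.} Put $w:=g^{i\bar j}\phi_i\phi_{\bar j}$; since $|\nabla\phi|^2\le 2w$ it suffices to bound $w$. Differentiating \eqref{E-1-1} once gives $g^{i\bar j}_\phi\,\p_k\phi_{i\bar j}=F_k+(g^{i\bar j}-g^{i\bar j}_\phi)\p_kg_{i\bar j}$, so substituting into the complex Bochner formula for $\t_\phi w$ regroups the third order terms into the nonnegative squares $g^{i\bar j}_\phi\phi_{pi}\overline{\phi_{pj}}+g^{i\bar j}_\phi\phi_{p\bar j}\overline{\phi_{p\bar i}}$, a term $2\,\mathrm{Re}\,g^{p\bar q}\phi_pF_{\bar q}$ bounded by $2|\nabla\phi|\,|\nabla F|$, and curvature terms of $(M,g)$ bounded by $C(1+w)\,\mathrm{tr}_{\omega_\phi}\omega$. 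Passing from $w$ to $\log(1+w)$ absorbs $|\nabla w|^2_{g_\phi}/(1+w)^2$ into the nonnegative squares --- the Cauchy--Schwarz step of B\l ocki \cite{Blocki1}, the essential point being that only $\nabla F$, never $\nabla^2 F$, enters --- leaving $\t_\phi\log(1+w)\ge -C\,\mathrm{tr}_{\omega_\phi}\omega-C|\nabla F|$. Setting $v:=\log(1+w)-2C\phi$ and using $\t_\phi\phi=g^{i\bar j}_\phi(g_{\phi,i\bar j}-g_{i\bar j})=n-\mathrm{tr}_{\omega_\phi}\omega$,
\[
\t_\phi v\ \ge\ C\,\mathrm{tr}_{\omega_\phi}\omega-2Cn-C|\nabla F|\ \ge\ -\,2Cn-C|\nabla F|\ =:\ -h,\qquad h\ge 0,\quad h\in L^{p_0}(M).
\]
Thus $v$ is a subsolution, and the coercive term $C\,\mathrm{tr}_{\omega_\phi}\omega$ has deliberately been retained for Step 2. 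Since $v=\log(1+w)+2C(-\phi)\ge 0$ and $\log(1+w)\le v\le\log(1+w)+2C\|\phi\|_{L^\infty}$, it suffices to prove $\sup_M v\le C$.

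\emph{Step 2: Moser iteration, and the main obstacle.} Using $\int_M\psi\,\t_\phi v\,\omega_\phi^n=-n\int_M\i\p\psi\wedge\bar\p v\wedge\omega_\phi^{n-1}$, testing $\t_\phi v+h\ge C\,\mathrm{tr}_{\omega_\phi}\omega$ against $\psi=v^{2\gamma-1}$ (admissible since $v\ge 0$ and $M$ is closed) and using $\mathrm{tr}_{\omega_\phi}\omega\,\omega_\phi^n=n\,\omega\wedge\omega_\phi^{n-1}\ge c_1 n\,\omega^n$ yield, for $\gamma\ge 1$,
\[
\int_M\i\p(v^\gamma)\wedge\bar\p(v^\gamma)\wedge\omega_\phi^{n-1}\ +\ \int_M v^{2\gamma-1}\,\omega^n\ \le\ C\gamma\int_M v^{2\gamma-1}h\,\omega_\phi^n .
\]
The crux, which I expect to be the principal difficulty, is that \emph{a priori} $\omega_\phi$ is not uniformly equivalent to $\omega$ --- that equivalence is precisely what Theorem~\ref{T-0} will eventually provide, but only once \eqref{E-5} is in hand --- so the first term above, the $\omega_\phi$-Dirichlet energy of $v^\gamma$, is not directly comparable with the background Dirichlet energy, and one cannot simply invoke the Sobolev inequality of $(M,g)$. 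My approach is to convert the $\omega_\phi$-energy into the background energy by combining the pointwise inequality $|\nabla u|_g^2\,\omega^n\le(\mathrm{tr}_{\omega_\phi}\omega)^{n-1}\,|\nabla u|_{g_\phi}^2\,\omega_\phi^n$ (again arithmetic--geometric mean, applied to $u=v^\gamma$), the uniform lower bound $\omega\wedge\omega_\phi^{n-1}\ge c_1\omega^n$ together with the retained second term above, and an integration by parts against the \emph{closed} forms in the binomial expansion $\omega_\phi^{n-1}=\omega^{n-1}+\sum_{j\ge1}\binom{n-1}{j}\omega^{n-1-j}\wedge(\i\p\bar\p\phi)^j$, so as to control $\int_M|\nabla(v^\gamma)|_g^2\,\omega^n$ in terms of the right-hand side above. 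Feeding this into the Sobolev inequality of $(M,g)$ applied to $v^\gamma$ and using $p_0>2n>n$ --- so that the conjugate exponent $p_0'$ lies strictly below the Sobolev gain $n/(n-1)$, leaving a fixed integrability improvement at each stage --- the standard bookkeeping (iterating over $\gamma=(n/(n-1))^m$ and summing the resulting geometric series) gives $\sup_M v\le C(\|v\|_{L^2(\omega^n)},\|h\|_{L^{p_0}},p_0,M,g,n)$.

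\emph{Step 3: the base case.} Finally $\|v\|_{L^2(\omega^n)}$ depends only on $\|\phi\|_{L^\infty}$: integrating by parts, $\int_M w\,\omega^n=n\int_M(-\phi)\big(\omega_\phi\wedge\omega^{n-1}-\omega^n\big)$, which is at most $2n\|\phi\|_{L^\infty}\int_M\omega^n$ because $[\omega_\phi\wedge\omega^{n-1}]=[\omega^n]$; combining this with the elementary bound $\big(\log(1+w)\big)^2\le C(1+w)$ gives $\|v\|_{L^2(\omega^n)}\le C(\|\phi\|_{L^\infty},M,g,n)$, and \eqref{E-5} follows. The genuinely delicate points are the constant in the Cauchy--Schwarz step of Step 1 and, above all, the comparison of the $\omega_\phi$-Dirichlet energy with the background one in Step 2 in the absence of an a priori two-sided metric bound.
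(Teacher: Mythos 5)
You have identified exactly the right obstacle in Step~2 (no a priori two-sided comparison between $\omega_\phi$ and $\omega$), but the fix you propose does not close, and in fact you have already thrown away the term that the paper uses to resolve it.

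The problem with Step~2: the pointwise inequality $|\nabla u|_g^2\,\omega^n\le(\mathrm{tr}_{\omega_\phi}\omega)^{n-1}|\nabla u|_{g_\phi}^2\,\omega_\phi^n$ contains the factor $(\mathrm{tr}_{\omega_\phi}\omega)^{n-1}$, which is exactly the quantity you cannot bound a priori (it blows up when some eigenvalue of $\omega_\phi$ relative to $\omega$ degenerates to $0$). The coercive term you retained, $C\,\mathrm{tr}_{\omega_\phi}\omega=C\sum_i(1+\lambda_i)^{-1}$, does not help absorb it: the product $\bigl(\sum_i(1+\lambda_i)^{-1}\bigr)\bigl(\sum_i|u_i|^2(1+\lambda_i)^{-1}\bigr)$ is \emph{not} bounded below by $\sum_i|u_i|^2$ (take $u$ concentrated in a direction where $\lambda_i$ is large), so the AM--GM/Cauchy--Schwarz trick that converts the $\omega_\phi$-energy into the background energy fails for your choice of barrier. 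And the "integration by parts against the closed forms in the binomial expansion of $\omega_\phi^{n-1}$" is not a step I can see how to make precise: the pieces $\omega^{n-1-j}\wedge(\i\p\bar\p\phi)^j$ are closed but not positive for $j\ge1$, and integrating by parts produces uncontrolled second derivatives of $v^\gamma$ together with powers of $\i\p\bar\p\phi$ that you have no estimate for at this stage.

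The missing idea is that the relevant positive quantity is not $\mathrm{tr}_{\omega_\phi}\omega=\sum_i(1+\lambda_i)^{-1}$ but $n+\t\phi=\sum_i(1+\lambda_i)$, for which the \emph{pointwise} matrix inequality $(n+\t\phi)\,g_\phi^{i\bar j}\ge g^{i\bar j}$ holds, and hence $(n+\t\phi)\,|\nabla u|^2_{g_\phi}\ge|\nabla u|_g^2$. This term enters the Bochner computation precisely through $g_\phi^{i\bar j}g^{k\bar l}\phi_{k\bar j}\phi_{i\bar l}=\sum_i\lambda_i^2/(1+\lambda_i)=\t\phi-n+\mathrm{tr}_{\omega_\phi}\omega$, which you discarded when you said the third order terms "regroup into nonnegative squares" that you then consume entirely in the Cauchy--Schwarz absorption of $|\nabla w|^2_{g_\phi}/(1+w)^2$. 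The paper keeps $e^{-A(\phi)}(n+\t\phi)$ as the coercive term in the differential inequality \eqref{E-2-15}. Then, testing with $u^{p-1}$, one has both $p(p-1)u^{p-2}|\nabla u|^2_{g_\phi}$ and $pu^{p-1}e^{-A(\phi)}(n+\t\phi)$ on the good side, and a single application of AM--GM plus $(n+\t\phi)|\nabla u|^2_{g_\phi}\ge|\nabla u|_g^2$ gives the \emph{pointwise} lower bound $\ve p\sqrt{p-1}\,u^{p-3/2}|\nabla u|_g$ -- that is, a $W^{1,1}$ estimate for $u^p$, not $W^{1,2}$. Feeding this into the $W^{1,1}\hookrightarrow L^{2n/(2n-1)}$ Sobolev inequality \eqref{E-2-21} and iterating with H\"older against $|\nabla F|\in L^{p_0}$, $p_0>2n$, closes the loop with the same exponent threshold you had in mind. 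So the route through $W^{1,1}$, driven by the $\phi_{k\bar j}\phi_{i\bar l}$ term and the inequality $(n+\t\phi)g^{i\bar j}_\phi\ge g^{i\bar j}$, is precisely what substitutes for the metric equivalence you correctly observed is unavailable.

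Your Steps~1 and~3 are otherwise fine (the barrier $v=\log(1+w)-2C\phi$ versus the paper's $u=e^{-A(\phi)}(|\nabla\phi|^2+1)$ is a cosmetic choice, and the $L^1$ base case via $\int w\,\omega^n=-\int\phi\,\t\phi\,\omega^n$ is the same computation), but as written the argument in Step~2 has a genuine gap that the paper avoids by a different, pointwise mechanism.
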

 
With a slight modification, the proof of  Theorem \ref{T-0} and Theorem \ref{T-1} holds for $\eqref{E-1}$, see Remark \ref{R-1} and Remark \ref{R-2}. In particular, the $L^\infty$ bound of $\phi$ follows from the maximum principle when $\l=-1$; hence Theorem \ref{T} also holds for \eqref{E-1} when $\l=-1$. 
 
We still use integral method and iteration argument to prove Theorem \ref{T-1};  the new ingredient is that we obtain some inequalities, see \eqref{E-2-11} and \eqref{E-2-15} below, which are essential for the iteration process. Similar inequalities are derived  in \cite{Blocki1},  but mainly with an emphasis on the point of local maximum of the barrier functions, which is sufficient for the maximum principle argument, but not for our case.

The integral method uses the ideas in the well-known theory of De Giorgi \cite {degiorgi}, Nash \cite{nash} and Moser \cite{moser}. Roughly stated, such a theory deals with the regularity problem of  the elliptic operator of the form
\[
L u= D_i(a^{ij}D_j u)=f
\]  
with measurable coefficients $a^{ij}$ such that $\l |\xi|^2\leq a^{ij}\xi_i\xi_j\leq \L |\xi|^2$ for some positive constants $\l, \L$.
Uniform ellipticity is essential in De Giorgi-Nash-Moser theory.

One observation in the present paper is that the upper bound for $a^{ij}$ is not necessary on compact manifolds (without boundary), which is required essentially to deal with the terms from cut-off functions.  To overcome the absence of  the {\it apriori} lower bound for $g^{i\bar j}_\phi$ (or the absence of the {\it apriori} uniform Sobolev constant for $\omega_\phi$),  we use $a^{ij}=(n+\t\phi)g^{i\bar j}_\phi$ instead when estimating $\t\phi$. For example, to estimate $\t\phi$,  we compute 
\begin{equation}\label{E-5}
\int_M ug^{i\bar j}_\phi (u^p)_i (u^p)_{\bar j} dvol_\phi,
\end{equation}
where $u=\exp(-C\phi)(n+\t \phi)$ is the barrier function used in \cite{Yau}.
Combining Yau's computation \cite{Yau}, this  gives the starting point of iteration process. The iteration process for the gradient estimate is much more complicated than \eqref{E-5} technically, which depends on the computations as in \eqref{E-2-11}--\eqref{E-2-20} in an essential way.  But module  these technical details, we believe that these ideas can be applied to other nonlinear equations on compact manifolds. 

\vspace{2mm}

{\bf Acknowledgements:} 
The authors are partially supported by an NSF grant.  WYH is  grateful to Prof. J.Y. Chen for constant support and encouragement. 

\section {Estimates of $\t\phi$}
We shall always assume a normalization condition
\begin{equation}\label{E-1-3}
\int_M \phi dvol_g=0.
\end{equation}
We shall also assume that $\phi$ and $F$ are both smooth and derive a priori estimates of $\t \phi$ depending on $\|\phi\|_{L^\infty}, \|\nabla \phi\|_{L^\infty}, \|F\|_{W^{1, p_0}}, p_0, M, g, n$. For simplicity, we shall not emphasize the dependence on $M, g$; while the dependence on the geometry of $(M, g)$ can be made quite explicit from the computation. 
We shall then  prove Theorem \ref{T-0}. 
\begin{proof}
We start with Yau's computation \cite{Yau}, 
\begin{equation}\label{E-1-4}\begin{split}
&\t_\phi\left (\exp(-C_1\phi)(n+\t \phi)\right)\\&\quad\geq \exp(-C_1\phi)\left[\t F-n^2 \inf_{i\neq l}R_{i\bar i l\bar l}-C_1n(n+\t \phi)\right]\\
&\quad\quad+\exp\left(-C_1\phi-\frac{F}{n-1}\right)\left(C_1+\inf_{i\neq l}R_{i\bar il\bar l}\right)(n+\t \phi)^{\frac{n}{n-1}}.
\end{split}
\end{equation}
Note that for any $\ve>0$, by Young's inequality,  there is a constant $C=C(\ve, n)$ such that
\[
n+\t \phi\leq \ve (n+\t \phi)^{\frac{n}{n-1}}+C(\ve, n).
\]
Denote $u=\exp(-C_1\phi)(n+\t \phi)$.
We can rewrite \eqref{E-1-4} as
\begin{equation}\label{E-1-5}
C_2 (n+\t \phi)^{\frac{n}{n-1}}+\exp(-C_1\phi)\t F-C_3\leq \t_\phi u,
\end{equation}
where  $C_2, C_3$ depend on $C_1, \|\phi\|_{L^\infty}, \sup F, n$. Note that $u>0$ is bounded from below by a fixed positive number.  
We use $dvol_g$ to denote the volume form of $g$, and $dvol_\phi$ the volume form of $g_{i\bar j}+\phi_{i\bar j}$. In particular,
\[
dvol_\phi=\exp(F) dvol_g.
\]
We compute, for $p>0$, 
\begin{equation}\label{E-1-6}
\t_\phi (u^p)=pu^{p-1}\t_\phi u+p(p-1)u^{p-2}|\nabla u|^2_\phi.
\end{equation}
Integration by parts, we compute
\begin{equation}\label{E-1-7}
\int_M u^{p+1}\t_\phi(u^p) dvol_\phi=-\frac{p+1}{p}\int_M u|\nabla (u^p)|^2_\phi dvol_\phi.
\end{equation}
On the other hand by \eqref{E-1-6}, we compute
\begin{equation}\label{E-1-8}\begin{split}
\int_M u^{p+1}\t_\phi(u^p) dvol_\phi&=p\int_Mu^{2p}(\t_\phi u) dvol_\phi+p(p-1)\int_Mu^{2p-1}|\nabla u|^2_\phi dvol_\phi\\
&=p\int_Mu^{2p}(\t_\phi u) dvol_\phi+\frac{p-1}{p}\int_M u|\nabla(u^p)|^2_\phi dvol_\phi.
\end{split}
\end{equation}
It follows from  \eqref{E-1-7} and \eqref{E-1-8} that
\begin{equation}\label{E-1-9}
-2\int_M u|\nabla (u^p)|^2_\phi dvol_\phi=p\int_Mu^{2p}(\t_\phi u) dvol_\phi.
\end{equation}
We then compute, by \eqref{E-1-5},
\begin{equation}\label{E-1-10}
\int_Mu^{2p}(\t_\phi u) dvol_\phi\geq \int_M u^{2p}\left(C_2(n+\t \phi)^{\frac{n}{n-1}}+\exp(-C_1\phi)\t F-C_3\right)dvol_\phi.
\end{equation}
To deal with the term involved with $\t F$, we compute, 
\begin{equation}\label{E-1-11}
\begin{split}
&\int_M u^{2p}\exp(-C_1\phi) \t F  dvol_\phi\\
&=\int_M u^{2p}\exp(-C_1\phi) \t F \exp(F) dvol_g\\
&=-\int_M \nabla F \nabla \left(\exp(F-C_1\phi)u^{2p}\right)dvol_g\\
&=-\int_M\left( \left|\nabla F\right|^2 u^{2p}-C_1\nabla F \nabla \phi u^{2p}+2\nabla(u^p) \nabla Fu^p \right)\exp(F-C_1\phi) dvol_g\\
&=-\int_M \left( \left|\nabla F\right|^2 u^{2p}-C_1\nabla F \nabla \phi u^{2p}+2\nabla(u^p) \nabla Fu^p \right) \exp(-C_1\phi)dvol_\phi.
\end{split}
\end{equation}
By \eqref{E-1-9}, \eqref{E-1-10} and \eqref{E-1-11}, we compute
\begin{equation}\label{E-1-12}\begin{split}
&\int_M u|\nabla (u^p)|^2_\phi dvol_\phi\leq \frac{p}{2}\int_M u^{2p}\left(C_3-C_2(n+\t \phi)^{\frac{n}{n-1}}\right) dvol_\phi\\
&\quad+\frac{p}{2} \int_M \left( \left|\nabla F\right|^2 u^{2p}-C_1\nabla F \nabla \phi u^{2p}+2\nabla(u^p) \nabla Fu^p \right) \exp(-C_1\phi)dvol_\phi\\
&\leq pC_4\int_M (1+|\nabla F|^2+|\nabla F||\nabla \phi|)u^{2p} dvol_\phi\\
&\quad+pC_4\int_{M}|\nabla F| |\nabla (u^p)| u^{p}dvol_\phi-\frac{p}{2}C_2\int_M u^{2p}(n+\t \phi)^{\frac{n}{n-1}} dvol_\phi,
\end{split}
\end{equation}
where $C_4=C_4(C_1, C_3, \|\phi\|_{L^\infty}). $
Note that $(n+\t \phi)g^{i\bar j}_\phi \xi_i\bar \xi_j\geq |\xi|^2$ for any vector $\xi=(\xi_1, \cdots, \xi_n)$. Then we compute
\begin{equation}\label{E-1-13}
\begin{split}
\int_M u|\nabla (u^p)|^2_\phi dvol_\phi=&\int_M \exp(-C_1\phi) (n+\t \phi)g^{i\bar j}_\phi \p_i (u^p)\p_{\bar j}(u^p) dvol_\phi\\
\geq&a\int_M |\nabla (u^p)|^2 \exp(F)dvol_g,
\end{split}
\end{equation}
where $a$ is a constant depending only on $C_1, \|\phi\|_{L^\infty}$,  and 
\begin{equation}\label{E-1-14}\begin{split}
pC_4\int_M\left|\nabla (u^p)\right|\left|\nabla F\right|u^p dvol_\phi\leq& \frac{a}{2} \int_M  |\nabla (u^p)|^2 \exp(F)dvol_g\\
&+\frac{p^2C_4^2}{2a}\int_M |\nabla F|^2 u^{2p}\exp(F)dvol_g.
\end{split}
\end{equation}
Combine \eqref{E-1-12}, \eqref{E-1-13} and \eqref{E-1-14}, we compute
\begin{equation}\label{E-1-15}
\begin{split}
&\int_M\left(|\nabla (u^p)|^2+pC_5 u^{2p+\frac{n}{n-1}}\right) dvol_\phi\\
&\leq p^2 C_6\int_M u^{2p}|\nabla F|^2  dvol_\phi+pC_6\int_M (1+|\nabla F|^2+|\nabla F||\nabla \phi|) u^{2p}dvol_\phi,
\end{split}
\end{equation}
where both positive constants $C_5$, $C_6$ depend on $\|\phi\|_{L^\infty}, \sup F, n$.   We shall assume $p\geq 1/4$ from now on. Then  we can get that, by \eqref{E-1-15}, 
\begin{equation}\label{E-1-16}
\int_M\left(|\nabla (u^p)|^2+pC_7 u^{2p+\frac{n}{n-1}}\right)dvol_g\leq p^2 C_8\int_M u^{2p} \left(|\nabla F|^2+1\right)dvol_g,
\end{equation}
where $C_7=C_7\left(\|\phi\|_{L^\infty}, \|F\|_{L^\infty}, n\right)$ and $C_8=C_8\left(\|\phi\|_{L^\infty}, \|\nabla \phi\|_{L^\infty},  \|F\|_{L^\infty}\right)$.
To get $L^\infty$ bound of $u$, we use the iteration method (see \cite{moser}). Recall the Sobolev inequality for $(M, g)$; there is a constant $C_s=C_s(n, g)$ such that, 
\begin{equation}\label{E-1-19}
\left(\int_M f^{\frac{2n}{n-1}}dvol_g\right)^{\frac{n-1}{n}}\leq C_s \left(\int_M |\nabla f|^2 dvol_g+Vol^{-n}(M, g)\int_M f^2dvol_g\right).
\end{equation}
Note that the above Sobolev inequality  is scaling invariant. 
Let $f=u^p$; it follows from \eqref{E-1-16} and \eqref{E-1-19} that
\begin{equation}\label{E-1-20}
\|u\|_{L^{\frac{2pn}{n-1}}}\leq (pC)^{1/p} \left(\int_M u^{2p}(|\nabla F|^2+1) dvol_g\right)^{\frac{1}{2p}}.
\end{equation}
By the H\"older inequality, we get that
\begin{equation*}
\int_M u^{2p}(|\nabla F|^2+1) dvol_g\leq \left(\int_M u^{2pq_0}\right)^{1/q_0} \left(\int_M (|\nabla F|^2+1)^{\frac{p_0}{2}}\right)^{\frac{2}{p_0}},
\end{equation*}
where $1/q_0+2/p_0=1$. When $|\nabla F|\in L^{p_0}$, it then follows that
\begin{equation}\label{E-1-21}
\|u\|_{L^{\frac{2pn}{n-1}}}\leq (pC)^{1/p} \|u\|_{L^{2pq_0}},
\end{equation}
where $C=C(\|\phi\|_{L^\infty}, \|\nabla \phi\|_{L^\infty}, \|F \|_{W^{1, p_0}})$. When $1<q_0<\frac{n}{n-1}$, let \[
b=\frac{n}{(n-1)q_0}>1.
\]
We can rewrite \eqref{E-1-21} as, for $p \geq 1$, 
\begin{equation}\label{E-1-22}
\|u\|_{L^{pb}}\leq (pC)^{\frac{2q_0}{p}}\|u\|_{L^p}.
\end{equation}
We can  then apply the standard iteration argument; 
let   $p=b^k$ for $k\geq 0$ in \eqref{E-1-22}, 
then we  compute from \eqref{E-1-22} that, for $k\geq 1$,
\[
\log \|u\|_{L^{b^k}}\leq  \frac{2q_0}{b^{k-1}}\left(\log b^{k-1}+\log C\right)+\log \|u\|_{L^{b^{k-1}}}.
\]
It follows that
\[
\log \|u\|_{L^{b^{k+1}}}\leq \sum_{i=0}^k \frac{2q_0}{b^i}\left(\log b^i+\log C\right)+\log \|u\|_{L^1}. 
\]
Since $b>1$, it is clear that, when $k\rightarrow\infty$, 
\[
 \sum_{i=0}^\infty \frac{2q_0}{b^i}\left(\log b^i+\log C\right)\leq C.
\]
It then follows that, when $k\rightarrow \infty$,
\begin{equation}\label{2-22}
\|u\|_{L^\infty}\leq C\|u\|_{L^1}\leq C,
\end{equation}
where $C=C(\|\phi\|_{L^\infty}, \|\nabla \phi\|_{L^\infty}, \|F \|_{W^{1, p_0}}, p_0, M, g,  n)$.  In other words, 
\begin{equation}\label{E-1-23}
0<n+\t\phi\leq C.
\end{equation}
\end{proof}

\begin{rmk}When $(M, g)$ has nonnegative bisectional curvature, one can get 
\[
\t_\phi (\t\phi)\geq \t F.
\]
By taking $u=n+\t\phi$ as in the arguments above, one can derive the following estimate of $\t\phi$ directly, without even assuming $\|\phi\|_{L^\infty}$ bound,
\[
0<n+\t\phi\leq C(\|F\|_{W^{1, p_0}}, p_0, M, g, n). 
\]
\end{rmk}

\begin{rmk}\label{R-1}For simplicity let $\l=1$ or $-1$ in \eqref{E-1}. Then \eqref{E-1-5} still holds (with different $C_2$ and $C_3$). Hence Theorem \ref{T-0} holds for $\eqref{E-1}$ by the same argument.
\end{rmk}

\section {The gradient estimate}

We shall prove Theorem \ref{T-1} in this section. 
\begin{proof}

We shall assume that $F$ and $\phi$ are both smooth and derive estimate of $\|\nabla \phi\|_{L^\infty}$ depending on $\|F\|_{W^{1, p_0}}, p_0, \|\phi\|_{L^\infty}, M, g, n$.  
We compute
\begin{equation}\label{E-2-1}
\begin{split}
\t_\phi (|\nabla \phi|^2)=&g^{i\bar j}_\phi \p_i\p_{\bar j} \left(g^{k\bar l}\phi_k\phi_l\right)\\
=&g^{i\bar j}_\phi \p_i (\p_{\bar j}g^{k\bar l}\phi_k\phi_{\bar l}+g^{k\bar l} \p_{\bar j}\phi_k\phi_{\bar l}+g^{k\bar l}\phi_k\p_{\bar j}\phi_{\bar l})\\
=&g^{i\bar j}_\phi(\p^2_{i\bar j} g^{k\bar l}\phi_k\phi_{\bar l}+\p_{\bar j}g^{k\bar l} \p_i(\phi_k\phi_{\bar l})+\p_i g^{k\bar l}( \p_{\bar j}\phi_k\phi_{\bar l}+\phi_k\p_{\bar j}\phi_{\bar l}))\\
&+g^{i\bar j}_\phi g^{k\bar l}\p_i (\p_{\bar j}\phi_k\phi_{\bar l+}\phi_k\p_{\bar j}\phi_{\bar l}).
\end{split}
\end{equation}
For simplicity, we can pick up a coordinate system such that at one point, $g_{i\bar j}=\delta_{i j}, \p_k g_{i\bar j}=\p_{\bar k}g_{i\bar j}=0$. 
We take derivative of \eqref{E-1-1}, at the given point, 
\begin{equation}\label{E-2-2}
g^{i\bar j}_\phi \p^3_{i,\bar j, k}\phi=F_k; g^{i\bar j}_\phi \p^3_{i,\bar j, \bar k}\phi=F_{\bar k}.
\end{equation}
Then we compute from \eqref{E-2-1}, \eqref{E-2-2}, 
\begin{equation}\label{E-2-3}
\t_\phi (|\nabla \phi|^2)=g^{i\bar j}_\phi g^{k\bar q}g^{p\bar l}R_{i\bar jp\bar q} \phi_k\phi_{\bar l}+g^{k\bar l}(F_k\phi_{\bar l}+F_{\bar l}\phi_k)+g^{i\bar j}_\phi g^{k\bar l}(\phi_{k\bar j}\phi_{i\bar l}+\phi_{ki}\phi_{\bar j\bar l}),
\end{equation}
where $R_{i\bar jp \bar q}$ is the bisectional curvature and  we use the notion of the covariant derivatives
\[
\phi_{ki}=\p^2_{k, i}\phi-\Gamma^l_{ki}\phi_l;  \phi_{\bar j\bar l}=\p^2_{\bar j, \bar l}\phi-\Gamma_{\bar j\bar l}^{\bar q}\phi_{\bar q}.
\]
Let $A(t): \R\rightarrow \R$ be an auxiliary function which will be specified later. We compute

\begin{equation}\label{E-2-4}
\t_\phi A(\phi)=g^{i\bar j}_\phi\p_i (A^{'}\phi_{\bar j})
=g^{i\bar j}_\phi (A^{''}\phi_i\phi_{\bar j}+A^{'}\phi_{i\bar j}),
\end{equation}and 
\begin{equation}\begin{split}\label{E-2-5}
\t_\phi e^{-A(\phi)}&=e^{-A(\phi)}((A^{'})^2-A^{''})g^{i\bar j}_\phi \phi_i\phi_{\bar j}-e^{-A(\phi)}A^{'}g^{i\bar j}_\phi \phi_{i\bar j}\\
&=e^{-A(\phi)}\left((A^{'})^2-A^{''}\right)g^{i\bar j}_\phi \phi_i\phi_{\bar j}+e^{-A(\phi)}A^{'}g^{i\bar j}_\phi g_{i\bar j}-n e^{-A(\phi)}A^{'}.\end{split}
\end{equation}
Then by \eqref{E-2-3} and \eqref{E-2-5}, we compute
\begin{equation}\label{E-2-6}
\begin{split}
\t_\phi \left(e^{-A(\phi)}|\nabla \phi|^2\right)=&\t_\phi e^{-A(\phi)} |\nabla \phi|^2+e^{-A(\phi)}\t_\phi (|\nabla\phi|^2)\\&+g^{i\bar j}_\phi \left(\p_i(e^{-A(\phi)})\p_{\bar j}(|\nabla\phi|^2)
+\p_{\bar j}(|\nabla\phi|^2)\p_i(e^{-A(\phi)})\right)\\
=& e^{-A(\phi)}\left((A^{'})^2-A^{''}\right)g^{i\bar j}_\phi \phi_i\phi_{\bar j}|\nabla \phi|^2+e^{-A(\phi)}A^{'}g^{i\bar j}_\phi g_{i\bar j}|\nabla \phi|^2\\
&-ne^{-A(\phi)}A^{'}|\nabla \phi|^2+e^{-A(\phi)}g^{i\bar j}_\phi g^{k\bar q}g^{p\bar l}R_{i\bar jp\bar q} \phi_k\phi_{\bar l}\\
&+e^{-A(\phi)}g^{k\bar l}(F_k\phi_{\bar l}+F_{\bar l}\phi_k)+e^{-A(\phi)}g^{i\bar j}_\phi g^{k\bar l}(\phi_{k\bar j}\phi_{i\bar l}+\phi_{ki}\phi_{\bar j\bar l})\\
&-e^{-A(\phi)}A^{'}g^{i\bar j}_\phi g^{k\bar l}\left(\phi_i(\phi_{k\bar j}\phi_{\bar l}+\phi_k\phi_{\bar j\bar l})+\phi_{\bar j}(\phi_{ki}\phi_{\bar l}+\phi_k\phi_{i\bar l})\right).
\end{split}
\end{equation}
To estimate the right hand side of \eqref{E-2-6}, we can pick up a coordinate system such that, at the given point, $g_{i\bar j}=\delta_{ij}, \phi_{i\bar j}=\delta_{i j}\phi_{i\bar i}=\l_i$. 
Note that
\[
g^{i\bar j}_\phi g_{i\bar j}=\sum_i \frac{1}{1+\l_i}, g^{i\bar j}_\phi \phi_{i}\phi_{\bar j}=\sum_{i}\frac{\phi_i\phi_{\bar i}}{1+\l_i}.
\]
Then we compute
\begin{equation}\label{E-2-7}
\begin{split}
 &(A^{'})^2g^{i\bar j}_\phi \phi_i\phi_{\bar j}|\nabla \phi|^2+g^{i\bar j}_\phi g^{k\bar l}\phi_{ki}\phi_{\bar j\bar l}-A^{'} g^{i\bar j}_\phi g^{k\bar l}(\phi_i\phi_k\phi_{\bar j\bar l}+\phi_{\bar j}\phi_{\bar l}\phi_{ki})\\
 &=\sum_{k, i} \left((A^{'})^2\frac{\phi_i\phi_{\bar i}}{1+\l_i}\phi_k\phi_{\bar k}+\frac{\phi_{ki}\phi_{\bar k\bar i}}{1+\l_i}-A^{'}\frac{1}{1+\l_i}(\phi_i\phi_k\phi_{\bar k\bar i}+\phi_{\bar i}\phi_{\bar k}\phi_{ki})\right)\\
 &=\sum_{k, i}\frac{1}{1+\l_i}\left(A^{'}\phi_i\phi_k-\phi_{ki}\right)\left(A^{'}\phi_{\bar k}\phi_{\bar i}-\phi_{\bar k\bar i}\right)\geq 0.
\end{split}
\end{equation}
We shall assume that the bisectional curvature of $g$ is bounded from below, namely for some $B\geq 0$, 
\[
R_{i\bar j k\bar l}\geq -B(g_{i\bar j}g_{k\bar l}+g_{i\bar l}g_{k\bar j}).
\]
We can then compute
\begin{equation}\label{E-2-8}
\begin{split}
g^{i\bar j}_\phi g^{k\bar q}g^{p\bar l}R_{i\bar jp\bar q} \phi_k\phi_{\bar l}=&\sum_{k, i, l}\frac{R_{i\bar i k\bar  l}}{1+\l_i}\phi_k\phi_{\bar l}\\
\geq &-B\left(\sum_i \frac{1}{1+\l_i}\right)|\nabla \phi|^2-B\sum_i\frac{\phi_i\phi_{\bar i}}{1+\l_i}. 
\end{split}
\end{equation}
We can also compute
\begin{equation}\label{E-2-9}
g^{i\bar j}_\phi g^{k\bar l}\phi_{k\bar j}\phi_{i\bar l}=\sum_i\frac{\l_i^2}{1+\l_i}=\t \phi-n+\sum_i\frac{1}{1+\l_i},
\end{equation}
and 
\begin{equation}\label{E-2-10}
g^{i\bar j}_\phi g^{k\bar l}(\phi_i\phi_{k\bar j}\phi_{\bar l}+\phi_{\bar j}\phi_k\phi_{i\bar l})=2\sum_i \frac{\l_i\phi_i\phi_{\bar i}}{1+\l_i}=2|\nabla \phi|^2-\sum_i\frac{2\phi_i\phi_{\bar i}}{1+\l_i}.
\end{equation}
We can then estimate \eqref{E-2-6}, taking \eqref{E-2-7}, \eqref{E-2-8}, \eqref{E-2-9} and \eqref{E-2-10} into account, that
\begin{equation}\label{E-2-11}
\begin{split}
\t_\phi\left(e^{-A(\phi)}|\nabla \phi|^2\right)\geq& -A^{''}e^{-A(\phi)}g^{i\bar j}_\phi\phi_i\phi_{\bar j}|\nabla \phi|^2+(A^{'}-B)e^{-A(\phi)}g^{i\bar j}_\phi g_{i\bar j}|\nabla \phi|^2\\
&+(2A^{'}-B)e^{-A(\phi)}g^{i\bar j}_\phi\phi_i\phi_{\bar j}-(n+2)A^{'}e^{-A(\phi)}|\nabla \phi|^2\\
&+e^{-A(\phi)}(\t\phi-n+g^{i\bar j}_\phi g_{i\bar j})-2e^{-A(\phi)}|\nabla F||\nabla\phi|. 
\end{split}
\end{equation}
Now we shall specify the function $A$. Recall that $\|\phi\|_{L^\infty}, \|F\|_{L^\infty}$ are bounded and let $C_0$ be a fixed positive constant such that $C_0=1+ \|\phi\|_{L^\infty}.$ We then choose
\[A(t)=(B+2)t-\frac{t^2}{2C_0}. 
\] 
It then follows that
\[
B+1\leq A^{'}(\phi)=B+2-\frac{\phi}{C_0}\leq B+3, A^{''}(\phi)=-C^{-1}_0.
\]
It is also easy to see that (for example see \cite{Yau}), 
\begin{equation}\label{E-2-12}
g^{i\bar j}_\phi g_{i\bar j}=\sum_i \frac{1}{1+\l_i} \geq (n+\t \phi)^{1/(n-1)}\exp(-F/(n-1)). 
\end{equation}
For simplicity, we then use  $\ve$ to denote a fixed positive small number, and $C$ a fixed positive large number,  if it is not specified, which depend only on $\|\phi\|_{L^\infty}, \|F\|_{L^\infty}, n, B$, $M$, $g$. These constants can vary line by line. 
We then compute, by \eqref{E-2-12},
\begin{equation}\label{E-2-13}\begin{split}
&-A^{''}e^{-A(\phi)}g^{i\bar j}_\phi\phi_i\phi_{\bar j}|\nabla \phi|^2+(A^{'}-B)e^{-A(\phi)}g^{i\bar j}_\phi g_{i\bar j}|\nabla \phi|^2\\
&\quad\geq e^{-A(\phi)}|\nabla \phi|^2\left(\frac{1}{C_0}\sum_i\frac{\phi_i\phi_{\bar i}}{1+\l_i}+\sum_{i}\frac{1}{1+\l_i}\right)\\
&\quad\geq  e^{-A(\phi)}|\nabla \phi|^2\left (\frac{1}{C_0}\sum_i\frac{\phi_i\phi_{\bar i}}{1+\l_i}+(n+\t\phi)^{1/(n-1)}e^{-F/(n-1)}\right)\\
&\quad\geq 2\ve_0|\nabla \phi|^{2+2/n},
\end{split}
\end{equation}
where we have applied the inequality, for each $i$, 
\[
\frac{\phi_i\phi_{\bar i}}{1+\l_i}+(n+\t\phi)^{1/(n-1)}\geq n(n-1)^{(1-n)/n}|\phi_i|^{2/n},
\]
and $\ve_0=\ve_0(\|\phi\|_{L^\infty}, \|F\|_{L^\infty}, n)$ is a fixed positive constant. Note that for any $\ve>0$, by Young's inequality, there exists a constant $C=C(\ve, n)$ such that
\begin{equation}\label{E-2-14}
|\nabla \phi|^2\leq \ve |\nabla \phi|^{2+2/n}+C(\ve, n).
\end{equation}
Now let \[u=\exp(-A(\phi))(|\nabla \phi|^2+1).\]
We can then compute, taking \eqref{E-2-11}, \eqref{E-2-13} and \eqref{E-2-14} into account, that
\begin{equation}\label{E-2-15}
\t_\phi u\geq \ve_0 |\nabla\phi|^{2+2/n}+e^{-A(\phi)}(n+\t\phi)-C|\nabla F||\nabla \phi|-C,
\end{equation}
where we have used that, by \eqref{E-2-5}, 
\[
\t_\phi (e^{-A(\phi)})\geq -C. 
\]
We then compute, for $p>0$, that
\begin{equation}\label{E-2-16}
\t_\phi (u^{p})=p u^{p-1}\t_\phi u+p(p-1)u^{p-2}|\nabla u|^2_\phi
\end{equation}
We  then compute, by \eqref{E-2-15},
\begin{equation*}
\int_M  u^{p-1}\t_\phi udvol_\phi\geq \int_M u^{p-1}(\ve_0|\nabla \phi|^{2+2/n}+e^{-A(\phi)}(n+\t\phi)-C|\nabla F||\nabla \phi|-C)dvol_\phi. 
\end{equation*}
It then follows, taking \eqref{E-2-16} into account, that
\begin{equation}\label{E-2-17}\begin{split}
&\int_M p(p-1)u^{p-2}|\nabla u|^2_\phi +p u^{p-1}(\ve_0|\nabla \phi|^{2+2/n}+e^{-A(\phi)}(n+\t\phi))\\
&\leq \int_M pu^{p-1}(C|\nabla F||\nabla \phi|+C))dvol_\phi. 
\end{split}
\end{equation}
When $p\geq 1$, we can compute that
\begin{equation}\label{E-2-18}
p(p-1)u^{p-2}|\nabla u|^2_\phi+pu^{p-1}e^{-A(\phi)}(n+\t\phi)\geq \ve p\sqrt{p-1} u^{p-3/2}|\nabla u|.
\end{equation}
Note that $u$ is bounded away from $0$, and 
$|\nabla \phi|\leq C u^{1/2}$. 
It then follows from \eqref{E-2-17}, \eqref{E-2-18} that
\begin{equation}\label{E-2-19}
\int_M \sqrt{p-1} u^{p-3/2}|\nabla u|dvol_g\leq C \int_M u^{p-1/2}(|\nabla F|+1)dvol_g.
\end{equation}
We can rewrite \eqref{E-2-19} as, for $p\geq 3/4$, that
\begin{equation}\label{E-2-20}
\int_M |\nabla (u^p)| dvol_g\leq C\sqrt{ p}\int_M u^p (|\nabla F|+1)dvol_g. 
\end{equation}
One can actually get the following,
\[
\int_M \left(|\nabla (u^p)|+C_1\sqrt{p} u^{p+1/2+1/n}\right) dvol_g\leq C\sqrt{ p}\int_M u^p (|\nabla F|+1)dvol_g,
\]
for some positive constant $C_1$; but we shall not need this. To get $L^\infty$ bound of $u$, we use the iteration method. Recall the following Sobolev inequality;
there exists a positive constant $c=c(M, g, n)$ such that for $f\in W^{1, 1}(M, g)$, we have
\begin{equation}\label{E-2-21}
\|f\|_{L^{\frac{2n}{2n-1}}}\leq c \int_M (|\nabla f|+Vol(M, g)^{-1/2n}|f|)dvol_g. 
\end{equation}
Taking $f=u^p$, it then follows from \eqref{E-2-20} and \eqref{E-2-21} that
\begin{equation}\label{E-2-22}
\|u^p\|_{L^{\frac{2n}{2n-1}}}\leq C\sqrt{p}\int_M u^p (|\nabla F|+1). 
\end{equation}
Now if $\|\nabla F\|_{L^{p_0}}$ is bounded, then by the H\"older inequality, we can get
\begin{equation}\label{E-2-23}
\int_M u^p (|\nabla F|+1)dvol_g\leq \left(\int_M u^{pq_0}dvol_g\right)^{1/q_0}\left(\int_M (|\nabla F|+1)^{p_0}dvol_g\right)^{1/p_0},
\end{equation}
where $1/p_0+1/q_0=1$.  So we can get that, from \eqref{E-2-22} and \eqref{E-2-23},
\begin{equation}\label{E-2-24}
\|u\|_{L^{\frac{2np}{2n-1}}}\leq (pC_2)^{1/2p}\|u\|_{L^{pq_0}},
\end{equation}
where $C_2=C_2(\|\phi\|_{L^\infty}, \|F\|_{W^{1, p_0}}, M, g, n)$. When $p_0>2n$, then $1<q_0<2n/(2n-1)$.
Let $b=q_0^{-1}2n/(2n-1)>1$, and take $p=q_0^{-1}b^k$ in \eqref{E-2-24} for $k\geq 0$, $k\in \N$,
it then follows  that
\begin{equation}\label{E-2-25}
\log \|u\|_{L^\infty}\leq \sum_{k=0}^\infty \frac{q_0}{b^k}\left(\log (q_0^{-1}b^k)+\log C_2\right) +\log \|u\|_{L^{1}}.
\end{equation}
It is clear that
\[
\|u\|_{L^1}\leq C\int_M |\nabla \phi|^2 dvol_g+C=-C\int_M\phi \t\phi dvol_g+C\leq C.
\]
It then follows that
\begin{equation}
\|u\|_{L^\infty}\leq C_3 =C_3(\|\phi\|_{L^\infty}, \|F\|_{W^{1, p_0}}, p_0, M, g, n). 
\end{equation}
\end{proof}

\begin{rmk}\label{R-2} Let $\l=1$ or $-1$ in \eqref{E-1}, then \eqref{E-2-15} still holds.  Hence Theorem \ref{T-1}  holds  for \eqref{E-1}.
\end{rmk}

\section{H\"older Estimates of Second Order and Solve the Equation}\label{S-2} To apply the method of continuity to solve \eqref{E-1-1}, one needs to obtain H\"older estimates of second order derivatives of $\phi$ and then higher order regularity follows from Schauder theory. For the Monge-Amp\`ere equations, $C^3$ estimates date back to Calabi's seminal third order estimates \cite{Calabi}; the idea is used in  \cite{Yau} to obtain $C^3$ estimates of $\phi$ for \eqref{E-1-1}.  Later on Evans \cite{Evans}, Krylov \cite{Krylov, Krylov1} proved that H\"older estimates of second order hold for fully nonlinear concave uniform elliptic operators.  All these results are originally stated for $F$ with  second derivatives or higher. But the H\"older estimates of second order derivatives are studied for uniform elliptic operators when right hand side has weaker regularity \cite{Schulz1, Caffarelli, Blocki, Wang}.  In particular, for the complex Monge-Amp\`ere equation,  Blocki \cite{Blocki} proved that  the H\"older estimates hold  when $F$ is  Lipschitz and $\t \phi$ is bounded.  With slight modifications of his argument, one can show that such an estimate  holds when $F\in W^{1, p_0}, p_0>2n$. 
The estimates can be  localized;  find a local potential $\Phi_0$ in a open domain $\Omega\subset M$ such that
 $g_{i\bar j}={\Phi_0}_{i\bar j}.$
We can rewrite \eqref{E-1-1} in $\Omega$ as 
\begin{equation}\label{E-3-1}
\det (v_{i\bar j})=f,
\end{equation}
where $v=\Phi_0+\phi, f=\exp(F) \det(g_{i\bar j})$. In particular, we can assume that $0<\l\leq \t v\leq \L$ and $f\in W^{1, p_0}$. Suppose $\|v\|_{L^\infty}, \|f\|_{W^{1, p_0}}\leq K$, then we have
\begin{lemma}\label{L-1} For any $\Omega^{'}\subset\subset \Omega$, 
\[
\|v\|_{C^{2, \alpha}(\Omega^{'})}\leq C(\Omega, \Omega^{'}, \l, \L, K),
\]
where $\alpha=\alpha(\Omega, \Omega^{'}, \l, \L, K)$. 
\end{lemma}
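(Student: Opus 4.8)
The plan is to recast \eqref{E-3-1} as a concave, \emph{uniformly} elliptic fully nonlinear equation with H\"older continuous right hand side, and then to invoke the interior second order H\"older estimate of Evans \cite{Evans} and Krylov \cite{Krylov, Krylov1}, in the form adapted to the complex Monge--Amp\`ere operator by Blocki \cite{Blocki}; the only points that need attention are the reduction of the $W^{1, p_0}$ hypothesis to H\"older data and the verification of uniform ellipticity.

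First I would pass from $W^{1, p_0}$ to H\"older regularity of the data. Since $\Omega\subset M$ has real dimension $2n$ and $p_0>2n$, the Sobolev embedding gives $f\in C^{0, \a_0}(\overline{\Omega'})$ with $\a_0=1-2n/p_0>0$ and $\|f\|_{C^{0, \a_0}(\Omega')}\le C(\Omega, \Omega', K)$; moreover $f=\exp(F)\det(g_{i\bar j})$ is bounded above and below by positive constants depending only on the data (recall $F\in W^{1, p_0}$ is H\"older, hence bounded, on $\Omega$). I would then establish uniform ellipticity. Let $\mu_1\le\cdots\le\mu_n$ be the eigenvalues of the positive Hermitian form $(v_{i\bar j})$ with respect to $(g_{i\bar j})$, so that $\t v=\sum_i\mu_i$ and $f=\det(g_{i\bar j})\prod_i\mu_i$. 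The bound $\t v\le\L$ gives $\mu_n\le\L$; and since $\prod_{j\ge 2}\mu_j\le\big((n-1)^{-1}\sum_{j\ge 2}\mu_j\big)^{n-1}\le\big(\L/(n-1)\big)^{n-1}$ by the arithmetic--geometric mean inequality, we get $\mu_1=\big(\prod_i\mu_i\big)\big(\prod_{j\ge 2}\mu_j\big)^{-1}\ge c(\L, K)>0$ from the lower bound on $f$. Hence all eigenvalues of $(v_{i\bar j})$ lie in a fixed interval $[\l_0, \L_0]\subset(0, \infty)$ with $\l_0, \L_0$ depending only on $\l, \L, K$; equivalently, the linearization $v^{i\bar j}\p_i\p_{\bar j}$ of \eqref{E-3-1} is, as a real second order operator, uniformly elliptic with constants depending only on $\l, \L, K$.

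With this in hand I would apply the interior estimate for concave uniformly elliptic equations. The operator $G(D^2v):=\log\det(v_{i\bar j})$ is a concave function of the complex Hessian, the equation reads $G(D^2v)=\log f$ with $\log f\in C^{0, \a_0}$, $\|\log f\|_{C^{0, \a_0}(\Omega')}\le C(K)$, and on the range $[\l_0, \L_0]$ it is uniformly elliptic. Since $\phi$, and hence $v$, is assumed smooth, the Evans--Krylov estimate \cite{Evans, Krylov, Krylov1} --- in the version for the complex Monge--Amp\`ere operator with $\t v$ bounded and H\"older right hand side proved by Blocki \cite{Blocki} --- yields
\[
\|v\|_{C^{2, \a}(\Omega')}\le C(\Omega, \Omega', \l, \L, K),\qquad \a=\min\{\a_0, \bar\a\},
\]
where $\bar\a=\bar\a(\Omega, \Omega', \l, \L, K)$ is the exponent produced by the Evans--Krylov iteration. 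In Blocki's argument \cite{Blocki}, which is written for $F$ Lipschitz, the right hand side enters only through its modulus of continuity, and $W^{1, p_0}\hookrightarrow C^{0, \a_0}$ supplies a H\"older one; so only notational changes are required. (If one prefers, one may first mollify $f$, solve \eqref{E-3-1} with smooth data, and pass to the limit, but this detour is unnecessary here since all functions are already smooth.)

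I expect the main obstacle to be the verification of uniform ellipticity --- concretely, the lower bound $\mu_1\ge c>0$ on the smallest eigenvalue of $(v_{i\bar j})$ --- since this is where the \emph{upper} bound $\t v\le\L$ must be combined with the positivity $f=\det(v_{i\bar j})\ge c>0$; without a lower bound on $\mu_1$ the $\log\det$ operator degenerates and the Evans--Krylov machinery does not apply. Once uniform ellipticity is granted the remainder is the standard concave-equation iteration, whose only external input about the inhomogeneous term is a H\"older modulus of continuity --- precisely what $W^{1, p_0}$, $p_0>2n$, provides.
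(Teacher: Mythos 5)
Your reduction of the hypothesis from $F\in W^{1,p_0}$ to $F\in C^{0,\a_0}$ via Sobolev embedding, followed by the appeal to Blocki's interior $C^{2,\a}$ estimate as though it accepted H\"older data, is precisely where the argument fails. Blocki's Theorem 3.1 in \cite{Blocki} is stated and proved for $F$ \emph{Lipschitz}; you assert that the right hand side enters his argument only through its modulus of continuity, so that a H\"older modulus would serve. That assertion is false for the argument Blocki actually runs. One takes first difference quotients of the equation in a fixed direction with small parameter $s$, and the resulting inhomogeneity (the quantity the paper denotes $f^s$) must lie, \emph{uniformly in $s$}, in an $L^q$ space with $q$ above the threshold needed for the weak Harnack inequality (\cite{Gil-Tru}, Theorem 8.18). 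If $F$ is Lipschitz then $f^s$ is uniformly in $L^\infty$; if $F\in W^{1,p_0}$ then $f^s$ is uniformly in $L^{p_0}$ with $p_0>2n$, which is exactly what the paper's proof observes and uses. But if $F$ is only in $C^{0,\a_0}$, the difference quotient $f^s$ scales like $s^{\a_0-1}$ and its relevant norm blows up as $s\to 0$; the Harnack step cannot be run uniformly, and the iteration collapses. So the $W^{1,p_0}$ information cannot be discarded. The paper's proof in fact keeps both pieces of information: it uses $f^s\in L^{p_0}$ for the Harnack step, and \emph{separately} uses $f\in C^{\a_0}$ (again from Sobolev embedding) at the point where Blocki's argument uses $|f(z)-f(w)|\le C|z-w|$, replacing $|z-w|$ by $|z-w|^{\a_0/n}$.

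The paper also explicitly flags the $C^\alpha$ case as unsettled at the end of Section 4 (``We might then believe that the similar results hold for $F\in C^\alpha$\ldots''), so the statement your proposal silently reduces to is a conjecture posed by the authors, not a theorem one can quote. Two further remarks. Your eigenvalue-pinching argument, deriving from $\t v\le\L$ and $\det(v_{i\bar j})\ge c>0$ that all eigenvalues of $(v_{i\bar j})$ lie in a fixed interval $[\l_0,\L_0]$, is correct and is indeed needed. And your instinct to go through Blocki rather than invoke the classical Evans--Krylov/Caffarelli theory directly is the right one: those theorems require uniform ellipticity of $F(D^2 u)$ as a function of the \emph{full real} Hessian, whereas $\log\det(v_{i\bar j})$ depends only on the $(1,1)$ part and is degenerate in the remaining directions, which is exactly what Blocki's paper is designed to handle. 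The gap is solely the unjustified weakening of Blocki's Lipschitz hypothesis to H\"older.
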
 
\begin{proof}The result is proved in \cite{Blocki} (see Theorem 3.1 in \cite{Blocki}) when $F$ is Lipschitz. Our proof here is a slight modification. The idea is the same as in Evans-Krylov theory; but to deal with the case when $F$ has weaker regularity, one needs to use the Harnack inequality  as in  \cite{Gil-Tru} Theorem 8.18. 
 We use  notations as  in \cite{Blocki}. We observe that $(3.2)$ in \cite{Blocki} holds with $f^s\in L^{p_0}$ if $F\in W^{1, p_0}$. Hence the Harnack inequality (Theorem 8.18 in \cite{Gil-Tru})  still applies. By the Sobolev embedding, $f\in C^{\alpha}$ for $\alpha=1-2n/p_0$, then $(3.3)$ in \cite{Blocki} holds with $|z-w|$ replaced by $|z-w|^{\alpha/n}$. Then the argument as in (\cite{Gil-Tru} Section 17.4) or (\cite{Blocki} Theorem 3.1) applies. 
\end{proof}

Since $(M, g)$ is a smooth compact manifold,  a standard covering argument applies and one can get global H\"older estimate of second order derivatives of $\phi$ by using Lemma \ref{L-1}.
Now we are in the position to prove Theorem \ref{T}.
\begin{proof}If $F\in W^{1, p_0}$ on $M$ such that $\|F\|_{W^{1, p_0}}\leq K$ for some positive constant $K$.  Let $F^k$ be a sequence of smooth functions such that $F_k\rightarrow F$ in $W^{1, p_0}$; in particular,  We can assume $\|F_k\|_{W^{1, p_0}}\leq K+1$ for any $k$. 
By Yau's result \cite{Yau}, there is a smooth solution $\phi^k$ which solves
\begin{equation*}
\log\left(\frac{\det(g_{i\bar j}+\phi^k_{i\bar j})}{\det(g_{i\bar j})}\right)=F^k
\end{equation*}
such that $(g_{i\bar j}+\phi^k_{i\bar j})>0$ with normalized condition $\int_M\phi^k dg=0.$ Since $\|\phi^k\|_{L^\infty}$ is uniformly bounded \cite{K1} independent of $k$, we can get that  $|\nabla \phi^k|$, $|\t\phi^k|$ are both bounded by Theorem \ref{T-0} and Theorem \ref{T-1}, independent of $k$.
We can then get uniform H\"older estimate of second order by Lemma \ref{L-1}. To get $W^{3, p_0}$ estimate, we can localize the estimate as follows. Let $\p$ denote an arbitrary first order differential operator in a domain $\Omega\subset M$.  Once H\"older estimate of second order is proved,  we compute in $\Omega$
 \[
 \t_{\phi^k} (\p \phi^k)=\p( F^k-\log(\det(g_{i\bar j}))-g^{i\bar j}_{\phi^k}\p g_{i\bar j}.
 \]
 It then follows from $L^p$ theory, for example see \cite{Gil-Tru} Chapter 9,  for any $\Omega^{'}\subset\subset\Omega$, 
 \[
\|\p\phi^k\|_{W^{2, p_0}(\Omega^{'})}\leq C(\Omega, \Omega^{'}, p_0, K).
 \]
In particular we get, 
\begin{equation}\label{E-1-24}
\|\phi^k\|_{W^{3, p_0}(M, g)}\leq C(p_0, K).
\end{equation}
Then there is a subsequence of $\{\phi^k\}$ which converges to $\phi$ and $\phi$ is in $W^{3, p_0}$  such that $(g_{i\bar j}+\phi_{i\bar j})>0$ defines a $W^{1, p_0}$ (and $C^\alpha$,  $\alpha=1-2n/p_0$) K\"ahler metric; hence $\phi$ is a classical solution of \eqref{E-1-1}.   The solution of \eqref{E-1-1} is actually unique \cite{Calabi53, Yau}. 
\end{proof}

The Sobolev embedding theorem asserts that  $F\in C^{\alpha}, \alpha=1-2n/p_0$ when  $F\in W^{1, p_0}$; while $p_0>2n$ is exactly on the border line, namely when $p_0\leq 2n$, $F$ does not have to be in $L^\infty$. We might then  believe that the similar results  hold for $F\in C^\alpha$, namely
when $F\in  C^{\alpha}$, $\alpha\in (0, 1)$,  \eqref{E-1-1} has a classical solution $\phi\in C^{2, \alpha}$. 

The  gradient estimate and estimate of $\t \phi$ in the present paper are both global; actually such estimates cannot be made purely local, for example see \cite{Blocki99, He}. 
But one might ask whether such an estimate holds or not for the Dirichlet problem of the complex Monge-Amp\`ere equations in a bounded strongly pseudoconvex domain in $\C^n$. In particular, one might  consider the complex Monge-Amp\`ere equation in a bounded strongly pseudoconvex domain $\Omega\subset \C^n$,  
\[
\log \det(\phi_{i\bar j})=f,
\]
where $f$ is Lipschitz (or $C^\alpha$), and $\phi|_{\p \Omega}=0$; is $\phi\in C^{2, \alpha}$?
This problem was studied in \cite{Schulz} when $f$ is Lipschitz; however the proof in \cite{Schulz} has a gap \cite{Blocki03, He}.
In essence, one would like to understand if the results for the real Monge-Amp\`ere equation ( see Caffarelli \cite{Caf}, Trudinger-Wang \cite{TruWang}  for example) hold or not for the complex Monge-Amp\`ere equation.


\begin{thebibliography}{s}
\bibitem{Abreu} 
M. Abreu, {\it K\"ahler geometry of toric varieties and extremal metrics},  Int. J.
Math. 9, 641-651, 1998.
\bibitem{Aubin}T. Aubin; {\it Equations du type de Monge-Amp\`ere surles vari\'et\'es K\"ahleriennes compactes.} C. R. Acad.
Sci. Paris 283, 119-121 (1976).
\bibitem{bt01}E. Bedford; B.A. Taylor,  {\it The Dirichlet problem for a complex Monge-Amp\`re equation.} Invent. Math. 37 (1976), no. 1, 1--44.
\bibitem{bt02}E. Bedford; B.A. Taylor,  {\it A new capacity for plurisubharmonic functions.} Acta Math. 37 (1982), no. 1-2, 1--40.
\bibitem{Blocki99}Z. Blocki; {\it On the regularity of the complex Monge-Amp\`ere operator}, Contemporary Mathematics 222, Complex Geometric Analysis in Pohang, ed. K.-T.Kim, S.G.Krantz, pp.181-189, Amer. Math. Soc. 1999. 
\bibitem{Blocki}Z. Blocki; {\it  Interior regularity of the complex Monge-Amp\`ere equation in convex domains.} Duke Math. J. 105 (2000), no. 1, 167--181.
\bibitem{Blocki03}Z. Blocki; {\it Interior regularity of the degenerate Monge-Amp\`ere equation,} Bulletin of the Australian Mathematical Society 68 (2003), 81-92. 
\bibitem{Blocki05}Z. Blocki, {\it On uniform estimate in Calabi-Yau theorem},  Sci. China Ser. A 48 (2005), suppl., 244Ð247. 
\bibitem{Blocki1}Z. Blocki; {\it  A gradient estimate in the Calabi-Yau theorem.} Math. Ann. 344 (2009), no. 2, 317--327.
\bibitem{Caffarelli}L. Caffarelli; {\it Interior a Priori Estimates for Solutions of Fully Non-Linear Equations}, Ann. of Math. (2) 130 (1989), no. 1, 189-213.
\bibitem{Caf}L. Caffarelli;  {\it Interior $W^{2,p}$ estimates for solutions of the Monge-Ampre equation.} Ann. of Math. (2) 131 (1990), no. 1, 135--150. 
\bibitem{Calabi53}E. Calabi; {\it On K\"ahler manifolds with vanishing canonical class.  Algebraic geometry and topology.}  A symposium in honor of S. Lefschetz,  pp. 78--89. Princeton University Press, 1957.
\bibitem{Calabi}E. Calabi; {\it Improper affine hyperspheres of convex type and a generalization of a theorem by K. J\"orgens.}  Michigan Math. J. 5 1958 105--126.
\bibitem{Calabi82} E. Calabi; {\it Extremal K\"ahler metric},   Seminar of Differential Geometry,
ed. S. T. Yau, Annals of Mathematics Studies {\bf 102}, Princeton
University Press (1982), 259-290.
\bibitem{Chen}X. Chen, {\it The space of K\"ahler metrics}, J. Differential Geom. 56 (2000) 189-234.
\bibitem{Chen-He}X. Chen, W. He; {\it On the Calabi flow,} Amer. J. Math. 130 (2008), no. 2, 539--570.
\bibitem{Chen-Tian}X. Chen, G. Tian;  {\it Geometry of K\"ahler metrics and foliations by holomorphic discs.} Publ. Math. Inst. Hautes \'Etudes Sci. No. 107 (2008), 1Ð107. 
\bibitem{Chen-Wang1}X. Chen, B. Wang; {\it Space of Ricci flows (I)},  arXiv:0902.1545.
\bibitem{Chen-Wang2}X. Chen, B. Wang; {\it K\"ahler Ricci flow on Fano manifolds(I)}, arXiv:0909.239.
\bibitem{degiorgi}E.  De Giorgi; {\it Sulla differenziabilit\`a e l\`analiticit\`a delle estremali degli integrali multipli regolari. (Italian)} 
Mem. Accad. Sci. Torino. Cl. Sci. Fis. Mat. Nat. (3) 3, 1957, 25--43. 
\bibitem{Demailly-Pali} J. P. Demailly, N. Pali, {\it Degenerate complex Monge-Amp\`re equations over compact K\"hler manifolds,}
Internat. J. Math. 21 (2010), no. 3, 357Ð405.  
\bibitem{Dinew}S. Dinew, {\it  Uniqueness in ${\mathcal E}(X, \omega)$.} J. Funct. Anal. 256 (2009), no. 7, 2113-2122.
\bibitem{Donaldson02}S.K. Donaldson;  {\it Scalar curvature and stability of toric varieties} J. Differential Geom. 62 (2002), no. 2, 289--349.
\bibitem{Donaldson04}S.K.  Donaldson;  {\it Conjectures in K\"ahler geometry},  Strings and
geometry, 71--78, Clay Math. Proc., {\bf 3}, Amer. Math. Soc., Providence, RI, 2004.
\bibitem{Donaldson05} S.K. Donaldson; {\it Interior estimates for solutions of Abreu's equation.} Collect. Math. 56 (2005), no. 2, 103--142.
\bibitem{Donaldson051} S.K. Donaldson; {\it Lower bounds on the Calabi functional.} J. Differential Geom. 70 (2005), no. 3, 453--472.
\bibitem{Donaldson08}S.K. Donaldson; {\it Extremal metrics on toric surfaces: a continuity method.} J. Differential Geom. 79 (2008), no. 3, 389--432.
\bibitem{Donaldson09}S.K. Donaldson; {\it Constant scalar curvature metrics on toric surfaces.} Geom. Funct. Anal. 19 (2009), no. 1, 83--136. 

\bibitem{Eyssidieux-Guedj-Zeriahi}P. Eyssidieux; V. Guedj; A. Zeriahi, {\it Singular K\"ahler-Einstein metrics.}  J. Amer. Math. Soc. 22 (2009), no. 3, 607-639.

\bibitem{Evans}L. Evans, {\it Classical solutions of fully nonlinear, convex, second order elliptic
equations,} Acta Math. 148 (1982), 47--157.
\bibitem{Guan-Li} B. Guan,  Q. Li; {\it Complex Monge-Amp\`ere equations on Hermitian manifolds}
arXiv:0906.3548.
\bibitem{Guan}P. Guan; {\it A gradient estimate for complex Monge-Amp\`ere equation}, 2008 preprint.
\bibitem{Guedj-Zeriahi}V. Guedj; A. Zeriahi, {\it The weighted Monge-Ampre energy of quasiplurisubharmonic functions.}  J. Funct. Anal. 250 (2007), no. 2, 442-482.

\bibitem{Hanani}A. Hanani, {\it Equations du type de Monge-Amp\`ere sur les varietes hermitiennes compactes},
J. Functional Anal. 137 (1996) 49-75.
\bibitem{He}W. He; {\it On  regularity of complex Monge-Amp\`ere equations}, arXiv:1002.4825. 
\bibitem{Gil-Tru} D. Gilbarg, N. Trudinger; {\it Elliptic partial differential equations of second order}, Springer, 1998 Edition.
\bibitem{K98}S. Kolodziej, {\it The complex Monge-Amp\`ere equation}, Acta Math. 180 (1998), 69-117.
\bibitem{K1}S. Kolodziej; {\it H\"older continuity of solutions to the complex MongeÐAmp\`ere equation with the right
hand side in $L^p$. The case of compact K\"ahler manifolds.}  Math. Ann. 342 (2008), no. 2, 379--386.
\bibitem{Krylov}N. Krylov; {\it Boundedly inhomogeneous elliptic and parabolic equations,} (Russian)
Izv. Akad. Nauk SSSR Ser. Mat.  46  (1982), no. 3, 487-523.
English Translation in Math. USSR Izv. {20} (1983).
\bibitem{Krylov1}N. Krylov; {\it Boundedly inhomogeneous elliptic and parabolic equations in a domain,} (Russian)
Izv. Akad. Nauk SSSR Ser. Mat.  47  (1983),  no. 1, 75-108.
English Translation in Math. USSR Izv. { 22}, 67-97 (1984).
\bibitem{Mabuchi04} T. Mabuchi; {\it Stability of extremal K\"ahler manifolds.} Osaka J. Math. 41 (2004), no. 3, 563Ð582.
\bibitem{moser}J. Moser; {\it A new proof of De Giorgi's theorem concerning the regularity problem for elliptic differential equations.} Comm. Pure Appl. Math. 13 1960 457--468.
\bibitem{nash} J. Nash; {\it Continuity of solutions of parabolic and elliptic equations. }
Amer. J. Math. 80 1958 931--954. 
\bibitem{Phong-Sturm} D.H. Phong, J.  Sturm; {\it On Pointwise Gradient Estimates for the Complex Monge-Amp\`ere Equation},  arXiv:0911.2881.
\bibitem{Schulz} F. Schulz; {\it A $C^2$ estimate for solutions of complex Monge-Amp\`ere equations},  J. Reine Angew. Math.  348  (1984), 88--93.
\bibitem{Schulz1} F. Schulz; {\it \"Uber nichtlineare, konkave elliptische Differentialgleichungen,} Math. Z. 191 (1986),
429--448.
\bibitem{Stoppa}J. Stoppa; {\it K-stability of constant scalar curvature K\"ahler manifolds}, arXiv:0803.4095.
\bibitem{ss}J. Stoppa, G. Sz\'ekelyhidi; {\it Relative K-stability of extremal metrics},  arXiv:0912.4095. 
\bibitem{Tian90}G. Tian; {\it On Calabi's conjecture for complex surfaces with positive first Chern class},
Invent. Math.  {\bf 101}  (1990),  no. 1, 101--172.
\bibitem{Tian97} G. Tian;  {\it K\"ahler-Einstein metrics with positive scalar curvature}
Invent. Math.  130  (1997),  no. 1, 1--37.
\bibitem{TruWang} N. Trudinger, X.J. Wang; {\it Boundary regularity for the Monge-Amp\`ere and affine maximal surface equations.} Ann. of Math. (2) 167 (2008), no. 3, 993--1028.
\bibitem{Wang}X. J. Wang; {\it Schauder estimates for elliptic and parabolic equations. } Chinese Ann. Math. Ser. B 27 (2006), no. 6, 637--642.
\bibitem{Yau}S. T. Yau; {\it On the Ricci curvature of a compact K\"ahler manifold and the
complex Monge-Ampere equation, I}, Comm. Pure Appl. Math.  31
(1978), 339-411.
\bibitem{Yau90} S. T. Yau; {\it Open problems in geometry.} Differential geometry: partial differential equations on manifolds (Los Angeles, CA, 1990), 1--28, Proc. Sympos. Pure Math., 54, Part 1, Amer. Math. Soc., Providence, RI, 1993.
\bibitem{Zhang}Z. Zhang, {\it On degenerate Monge-Amp\`ere equations over closed K\"ahler manifolds.}  Int. Math. Res. Not. 2006, Art. ID 63640, 18 pp.
\end{thebibliography}
\end{document}